\newcommand{\add}[1]{{#1}}
\newcommand{\R}{\mathbb{R}}
\newtheorem{remark}{Remark}
\newtheorem{assumption}{Assumption}
\newtheorem{theorem}{Theorem}[section]
\newtheorem{lemma}[theorem]{Lemma}
\newtheorem{definition}{Definition}[section]
\newtheorem{algorithm}{Algorithm}[section]
\newtheorem{example}{Example}[section]
\newcolumntype{L}[1]{>{\raggedright\let\newline\\\arraybackslash\hspace{0pt}}m{#1}}
\newcolumntype{C}[1]{>{\centering\let\newline\\\arraybackslash\hspace{0pt}}m{#1}}
\newcolumntype{R}[1]{>{\raggedleft\let\newline\\\arraybackslash\hspace{0pt}}m{#1}}
\newcommand{\ba}{\begin{eqnarray}}
\newcommand{\ea}{\end{eqnarray}}
\newcommand{\bas}{\begin{eqnarray*}}
\newcommand{\eas}{\end{eqnarray*}}
\newcommand{\porescale}{porescale}
\begin{document}
   \title{Numerical Analysis of a Parabolic Variational Inequality System  Modeling Biofilm Growth at the Porescale}
\author{Azhar Alhammali \and Malgorzata Peszynska}
\address{Department of Mathematics, Oregon State University, Corvallis, OR 97330
\\
email: 
\texttt{alhammaz@oregonstate.edu}
\\
\texttt{mpesz@math.oregonstate.edu}
}
%\date{\today}
\maketitle
%%%%%%%%%%%%%%%%%%%%%%%%%%%%%%%%%%%%%%%%%%%%%%%%%%%%%%%%%%%%%%%%%%%%%
%%-----------------------Introduction------------------------------
\section*{Abstract}

In this paper we consider a system of two coupled nonlinear diffusion--reaction partial differential equations (PDEs)
which model the growth of biofilm and \add{consumption} of the nutrient. At the scale of interest the biofilm density is subject to a pointwise constraint, thus the biofilm PDE is framed as a parabolic variational inequality. We derive rigorous error estimates for a finite element (FE) approximation to the coupled nonlinear system and confirm experimentally that the numerical approximation converges at the predicted rate.  We also show simulations in which we track the free boundary in the domains which resemble the pore scale geometry and in which we test the different modeling assumptions. 

\section*{Keywords} Parabolic variational inequality, coupled system, biofilm growth, finite elements, error estimates, semismooth Newton solver.

\section{Introduction}\label{sec:int}
Coupled bio-chemical interactions involving microbial cells are very important in many applications including the food industry and medicine. In this paper we focus on the applications involving ``selective plugging'' by biofilm in porous media such as, e.g., in microbial enhanced oil recovery (MEOR) and in carbon sequestration.   In these applications the microbes coat the grains of the porous medium and plug the high-permeability zones thereby enhancing overall recovery \cite{MacLeod88} or preventing leakage by promoting mineralization \cite{ebigbo2012,verba2016,CVTAKPT}.

The mathematical model we consider was proposed in \cite{PTISW2016} to describe some microorganisms and nutrient suspended in ambient fluid. Their densities (or concentrations) are denoted by $B(x,t)$ and $N(x,t)$, respectively, and these suspensions spread by diffusion as in
\begin{subequations}
\label{eq:ssystem}
\ba
\label{eq:systemB}
           \frac{\partial B}{\partial t} - \nabla \cdot (D_B \;\nabla B) -\Lambda &= F(B,N)+f(x,t),
           \\
           \label{eq:systemN}
 \frac{\partial N}{\partial t} - \nabla \cdot (D_N \;\nabla N) &= G(B,N)+g(x,t).
\ea
\end{subequations}
The nonnegative diffusivities $D_B,D_N$ are discussed in the sequel, and their choice plays an important role in the dynamics of spreading. For the 
growth $F$ and consumption $G$ a common choice are Monod functions
\ba
\label{eq:monod}
F(B,N)=\kappa_B P(N)B,\;  G(B,N)=-\kappa_N P(N)B,\;\;
P(N)=\frac{N}{N+N_0},
\ea
where $k_B\geq k_N>0, N_0>0$ are known constants. The external source and sink terms are denoted by $f,g$. The model is completed by boundary and initial conditions specified later. \add{Here we assume that the fluid is at rest, so the model \eqref{eq:ssystem} does not include transport by advection}. \add{See references to the biofilm--nutrient models in, e.g., \cite{ebigbo2012,Eberal,PTISW2016}.} 

The term $\Lambda$ in \eqref{eq:systemB} enforces the constraint $B(x,t) \leq B^*$ written as Nonlinear Complementarity Condition (NCC)
\ba
\label{eq:constraint}
B(x,t) \leq B^*, \;\; \Lambda \leq 0, \;\; \Lambda(B-B^*)=0.
\ea
This salient feature of the model in \cite{PTISW2016} leads to the main challenges addressed in this paper. We explain the need for \eqref{eq:constraint}, and the role and the relationship between $\Lambda$, $B$, and $B^*$ in what follows. 

Without the constraint \eqref{eq:constraint} is not active, i.e., with $\Lambda=0$ in \eqref{eq:ssystem}, the growth of the solution $B(x,t)$ predicted by \eqref{eq:ssystem} under some conditions such as no-flux boundary conditions and with abundant nutrient can be exponential. This is only realistic at large spatial scales of m or km, typical for laboratory containers and reservoirs, or at small concentrations.    
\add{Other features of \eqref{eq:ssystem}, e.g., of dynamics of $B+\tfrac{\kappa_B}{\kappa_N}N$ under various assumptions on $f$ and $g$ can be analyzed, but are out of our scope. } 
%

%\medskip
\add{We are interested in the  growth of microbes at the mm scale relevant, e.g., to biomedical applications, or in the {\em biofilm phase} at the pore-scale of $\mu$m.  At that scale the constraint \eqref{eq:constraint} comes from the fact that the microbial cells have a finite size, so their density is limited.  
In addition, when $B\approx B^*$ is reached, the cells form  \add{the biofilm, a gel-like substance made of microbial cells which are surrounded by an extracellular polymer substance (EPS) matrix. The EPS matrix protects the cells, which tend to aggregate, and it is not penetrated by larger molecules, e.g., of imaging contrast. The interface between the biofilm domain denoted by $\Omega^*(t)=\{x \in \Omega: B(x,t)=B^*\}$, and the surrounding fluid domain 
 $\Omega^-(t)=\{x: B(x,t)<B^*\}$  is a {\em  free boundary} $\Gamma(t) =\partial \Omega^*(t) \cap \partial \Omega^-(t)$ visible to human eye and easily recognized by x-ray $\mu$-CT tomography or other microscopy equipment; see \cite{PTISW2016,verba2016,CVTAKPT}. The growth of $\Omega^*(t)$ proceeds through interface only; this explains the tapering of exponential growth reported, e.g., in \cite{CVTAKPT}.} 
Finding  $\Lambda \neq 0$ and the free boundary $\Gamma(t)$ is part of the evolution problem, and the PDE \eqref{eq:systemB} is formally a nonlinear {\em parabolic variational inequality} (PVI) coupled to the nutrient dynamics.}

\medskip
{\bf Theoretical and practical challenges and contributions of this paper}.  
First, the solutions to  free-boundary problems and variational inequalities such as \eqref{eq:ssystem} have low regularity. For this reason we consider only a linear Galerkin Finite Element approximation coupled with fully implicit time-stepping for which we prove convergence roughly of $O(h)$ in the $L^2$ norm if $\Delta t=O(h)$. The main theoretical challenges are the coupled nonlinear nature of the system and the constraint. 
We confirm the rate with numerical experiments and describe a robust and efficient nonlinear solver. Second, we work in complicated geometries obtained from imaging, such as the void space between the grains of a porous medium at the {\porescale}, and study some model variants which support qualitative behavior in experiments \cite{verba2016,CVTAKPT,PTISW2016}. \add{The computational challenge is to carry out the simulations in complicated domains which require careful grid generation and pre- and post- processing.}

\medskip
{{\bf Literature remarks.} Numerical simulation of biofilm growth is of interest because the physical experiments with biofilm and its imaging are quite difficult \cite{PTISW2016,ebigbo2012,verba2016,CVTAKPT}. Various models for biomass growth and biofilm growth address the growth at the {\porescale} and the permeability of the porous domains plugged up by biofilm; however, their focus is selective.  In particular, the models \cite{ebigbo2012,schulz2017} do not impose the constraint \eqref{eq:constraint}, while the model in \cite{pop2010} considers a simplified free boundary geometry in a strip.   In turn, the hybrid differential--lattice models in \cite{Eberal} and \cite{Valocchi} enforce \eqref{eq:constraint}, but their structure does not permit numerical convergence analysis. A complex family of phase field models is considered in \cite{zhangklapper}, but its computational complexity seems prohibitive at the {\porescale}. Further considerations of complexity and uncertainty in upscaling are in \cite{CKP18}.
Finally, the model in \cite{PTISW2016} is more general than \eqref{eq:ssystem}, since it considers advective transport, coupled Navier-Stokes flow solver, as well as dynamic upscaling of the flow solutions to obtain permeability. However, there is no numerical analysis in \cite{PTISW2016}. }

The literature  on the topic of numerical approximation of parabolic systems is enormous; we recall only a few results which directly guide our work.  The classical results by Wheeler \cite{Wheeler73} for unconstrained scalar problems  (see also \cite{Thomee1997}, Chapter 13) establish second order $l^\infty(L^2)$ error estimate of the linear finite approximation in space and backward Euler in time, but require high regularity of the solution {$B$}, such as {$\frac{\partial^2 B}{\partial t^2} \in L^2(Q)$}. 
The works on elliptic and parabolic variational inequalities (EVI and PVI), such as on first order convergence of Galerkin FE \cite{BHR1977,Elliott1981,Baiocchi1989},  account for the lack of regularity of their solutions but handle only scalar linear problems. 
The analyses of Johnson \cite{Johnson1976} for the subproblem \eqref{eq:systemB} are the closest to what we consider here but they require $F=0$ and $D_B=$const. The paper \cite{Vuik1990}  considers more general finite differencing in time, but requires stronger regularity to obtain first order of convergence in {$l^\infty(L^2)$.} For systems without constraints,  the analysis in \cite{BD2007} allows degeneracy in {the diffusivity in $D_B(B)$} and gives  $O(h^{1/2})$ order of convergence in {$l^\infty(L^2)$}-norm.  Finally there is more work on FE approximation to free boundary problems \cite{Verdi, Nochetto, Rulla}; many recent papers focus on {adaptivity} which we do not consider here. 

\medskip
{\bf Plan of the paper.} In Sec.~\ref{sec:model} we provide the details on the model. In Sec.~\ref{sec:error} we define the FE approximation and prove our estimate. In Sec.~\ref{sec:solver} we describe the solution method framed as a semi-smooth Newton solver for the NCC. In Sec.~\ref{sec:NE} we present  numerical experiments which confirm the predicted convergence rate  and illustrate different qualitative behavior depending on the assumed nonlinear diffusivity models.

%%
%%
%%%------------------------------------------------------------------
\section{Background and formal setting}
\label{sec:model}

\add{In this section we provide details of our model which is a parabolic system of PDEs under a constraint, i.e., of parabolic variational inequalities. We discuss the rigorous setting including the PDE model in the sense of distributions and its weak formulation. }

Let $\Omega$ be an open bounded domain in {$\mathbb{R}^d$; $d$=$2$, with  a smooth boundary $\partial \Omega$.} {(In our numerical experiments cover we consider $d=1,2,3$.)} Throughout the paper, we use standard notation on $L^2(\Omega)$ and Sobolev spaces $H^2(\Omega)$ and $H_0^1(\Omega)$; see, e.g., \cite{Showater97}. Let $\|\cdot\|_0$ denote the norm  on $L^2(\Omega)$, and $\|\cdot\|_s$ the norm  on $H^s(\Omega)$ where $s$ is a nonnegative integer. $J=[0,T]$ denotes the time interval with final time $T>0$. We write $u\in L^2(H_0^1)$ to mean $u \in L^2(J;H_0^1(\Omega))$; similar shorthand is applied to other notation on functional spaces. In particular, the notation $L^2(L^2)$ means $L^2(J;L^2(\Omega)) =L^2(Q)$ with $Q=\Omega \times J$.   

First we discuss the constraint \eqref{eq:constraint}, and denote $\R^*={(-\infty ,B^*]}$. The indicator function $I_{\R^*}$ takes value $0$ in the set $R^*$ and $\infty$ outside: 
$
I_{\R^*}(B)=\left\{
\begin{array}{ll}
0 &  B \in \R^*,\\
+\infty& \mbox{otherwise}.
\end{array}
\right. 
$
Denote by $\partial I_{\R^*}$ the subgradient of $I_{\R^*}$, the multivalued constraint graph 
\[
\partial I_{\R^*}=\left((-\infty,B^*)\times \{0\}\right) \cup \left(\{B^*\} \times [0,\infty)\right).\]

\add{Now we write the model \eqref{eq:ssystem} formally in the sense of distributions using the operator $\partial I_{\R^*}(B)$ instead of $-\Lambda$. Formally, when $\partial I_{\R^*}(B)$ appears in an equation, $B$ must 
satisfy \eqref{eq:constraint}, i.e., 
be in the domain $K$ of $\partial I_{\R^*}(B)$, where the set $K:=\{B \in H_0^1(\Omega) ; \; B \le {B^\ast} \;\; {a.e} \;\;\mbox{on}\;\; \Omega \}$ is a convex subset of $H_0^1(\Omega)$. The model  reads}
\begin{subequations}
\label{eq:pvi}
\ba
\label{eq:pviB}
\frac{\partial B}{\partial t} - \nabla \cdot (D_B \;\nabla B) + \partial I_{(-\infty ,B^*]}(B)  &\ni F(B,N) +f(x,t),& \;\; \mathrm{a.e.\ in\ } \Omega, \; t>0, 
\\
\label{eq:pviN}
\frac{\partial N}{\partial t} - \nabla \cdot (D_N \;\nabla N) &= G(B,N)+g(x,t),&  \mathrm{a.e.\ in\ } \Omega, \; t>0.
\ea
\add{The operator $\partial I_{\R^*}(B)$ is multivalued, so we use the symbol $\ni$. However, there is no ambiguity in the choice of the particular selection $\Lambda \in \partial I_{\R^*}(B)$, since this selection is always exactly the value that keeps $B(x,t)$ in the convex set $K$.}  

The model is completed with the initial and boundary conditions as follows
\ba
B(x,0)&= B_{init}(x)&\;\; x \in \Omega, \label{eq:pv3}\\
N(x,0) &= N_{init}(x)&\;\; x \in \Omega, \\      
B(s,t)&=0,&\;\;s \in \partial \Omega, \; t>0, \label{eq:pvbc1}
\\ N(s,t)&=0,& \;\; s \in \partial \Omega, \; t>0. \label{eq:pvbc2}
\ea
\end{subequations}
\begin{remark}
The choice of Dirichlet conditions \eqref{eq:pvbc1}--\eqref{eq:pvbc2}
is merely for the easiness of analysis. In practice, Neumann conditions are realistic in simulations of a closed system. Furthermore, while our analysis is restricted to the case where $D_B,D_N$ do not depend on $B,N$, we consider other variants in numerical experiments. 
\end{remark}

%%------------------------------------------------------
%\subsubsection{}
The weak form of \eqref{eq:pvi} is the variational inequality
%%%%%%
\begin{subequations}
\label{eq:PVI}
	\begin{eqnarray}
B(t) \in K: \left(  \frac{\partial B}{\partial t}, \psi-B \right) +(D_B\nabla  B,\nabla(\psi -B))&\ge & ( F(B,N){+f}, \psi -B),  \forall \psi \in K, \label{eq:cd}\\
N(t) \in H_0^1(\Omega): \left(\frac{\partial N}{\partial t}, \chi\right)+(D_N \nabla N, \nabla \chi)&=& (G(B, N)+g, \chi),  \forall \chi \in H_0^1(\Omega), \label{eq:nv}\\
B(x,0)&=&{B_{init}(x)}, \label{eq:dd}\\
 N(x,0)&=&{N_{init}(x)}, \label{eq:de}
 \end{eqnarray}
\end{subequations}
where 
$( \cdot, \cdot)$ is the duality pairing; (also, the scalar product on $L^2(\Omega)$); \add{see \cite{brezis,Showater97,Ulbrich2011}}. \add{The symbol $\geq$ in \eqref{eq:cd} expresses the fact that the admissible solutions and test functions are defined by inequalities, thus they constitute a convex set $K$ rather than the space $H_0^1(\Omega)$. In the numerical model the symbol ``$\geq$'' is replaced by ``$=$'' and the presence of a Lagrange multiplier.  }

For the needs of subsequent numerical analysis model  we make some assumptions on the data and the regularity of the solutions typical for parabolic variational inequalities.  
\begin{assumption}
\label{as:data}
\begin{enumerate}[(A)]
    \item  $D_B, D_N:\Omega\to \R$ are Lipschitz continuous with a Lipschitz constant $R$, and $0 < \mu_1\le D_B(x)\le \mu_2, \; \; 0 < \nu_1\le D_N(x)\le \nu_2 \; \; \mbox{for $x\in \Omega$}$ and some $\mu_1,\mu_2,\nu_1,\nu_2$. 
    \label{c:Diff}
\item  $F(B,N)$ and $G(B,N)$ are  smooth functions, Lipschitz with respect to $B$ and to $N$ with a Lipschitz constant $M$. Further assume $F$ and $G$ are linear in $B$ and uniformly bounded in $N$ on  $\mathbb{R}\times \mathbb{R}^+$,    
\ba
\label{eq:zero}
F(B,0)=0=F(0,N), \; G(B,0)=0=G(0,N)\; \forall B,N\in \mathbb{R}.
\ea
[We note that Monod growth functions defined by \eqref{eq:monod}  satisfy these conditions. \label{c:FG}] 
\item $f,g\in C(L^\infty)$, $\frac{\partial f}{\partial t}\in L^2(L^\infty)$.\label{c:fg} 
\item $B^*>0$ is given. \label{c:const}
\item  { $B_{init}, N_{init}\in W^{2,\infty}$, and $B_{init}\le B^*$.}\label{c:init}
\item {$B,N\in L^\infty(W^{2,p})$;  $1\le p  < \infty $, and $\frac{\partial B}{\partial t}, \frac{\partial N}{\partial t}\in L^2(H_0^1)\cap L^\infty(Q)$. Also, $\frac{\partial^2 N}{\partial t^2}\in L^2(Q)$.}\label{c:sol}
\end{enumerate}
\end{assumption}

\begin{remark}
The assumption $\frac{\partial^2 N}{\partial t^2}\in L^2(Q)$ can be easily dropped. This is useful if  the second PDE of system \eqref{eq:pvi} is constrained.
The assumptions \ref{c:sol}  on regularity of $B$ are realistic; in general  $\tfrac{\partial^2 B}{\partial t^2} \not \in L^2(H^{-1})$.
\end{remark}

We denote by $\chi_A$ the characteristic function of a set $A$. We
shall use the elementary inequality 
\begin{equation}
a b \le \frac{\epsilon }{2}a^2+\frac{1}{2 \epsilon}b^2 \; \; \forall a, b \in \mathbb{R}, \; \epsilon >0, \label{eq:eps}
\end{equation}
and Poincaré-Friedrichs inequality 
\begin{equation}
\|\psi\|_0 \le C_{PF} \|\nabla \psi\|_0, \;\; \psi \in H_0^1(\Omega);  \;\; C_{PF}>0, \label {eq:PF}
\end{equation}

%%------------------------------------------------------

%%------------------------------------------------------
\section{The Approximation and Error Estimate}
\label{sec:error}

In this section we formulate a fully discrete approximation to \eqref{eq:PVI} using backward Euler scheme in time and piecewise linear finite element method in space.  The notation is fairly standard; see, e.g., \cite{Thomee1997}. 

%%%%
\subsection{Discrete Problem}
For $h> 0$, {let  $\mathcal{T}_h=\{T_i\}$ be a conforming triangulation of $\Omega$ such that the length of each side of any element $T_i$ is at most $h$. We assume that no vertex of any triangle lies in the interior of another triangle, and all angles of the  triangles are bounded below by a fixed positive constant, and that $\Omega=\bigcup_{T \in \mathcal{T}_h}$.

Define the space $V_h$ for piecewise linear approximations and its convex subset $K_h$
\bas
V_h=\{\psi \in C(\bar \Omega): \psi \; \mbox{ is linear on each } T_i, \; \psi=0 \mbox{ on } \partial\Omega\}, \;\; 
K_h=V_h \cap K.
\eas
{In what follows we work with the product space
 $\mathbf{H}=H_0^1(\Omega)\times H_0^1(\Omega)$, with the norm $\|(B,N)\|_{\mathbf{H}}=\sqrt{\|B\|_1^2+\|N\|_1^2}$. Let $\mathbf{V}_h=V_h\times V_h \subset \mathbf{H}$. }
%%%

We use uniform time-stepping on $J$ 
for simplicity of the exposition. One can also easily use non-uniform or adaptive time-stepping, but we skip details. Let $\Delta t=N_T^{-1} T;  \; N_T $ a positive integer, $t_n =n \Delta t$,  $J_n =(t_n, t_{n+1}]$.  We denote $\psi^n =\psi(t_n)$. In this section $\partial$ refers to a difference $\partial \psi^n =(\psi^{n+1} -\psi^n)/\Delta t$ for any $\psi$.  Let $\Upsilon=\{t_0,\ldots,t_{N_T}\}$ be the set of time steps.

We approximate \eqref{eq:PVI} as follows. We seek 
$B_h:\Upsilon \to K_h$ and  $N_h:\Upsilon \to V_h$ which satisfy, for $n=0, \ldots, N_T-1$ 
%%%%
\begin{subequations}
\label{eq:DPVI}
	\begin{multline}
 (\partial B_h^n, \psi-B_h^{n+1})+(D_B \nabla  B_h^{n+1}, \nabla \psi - \nabla B_h^{n+1})\\
 \ge   (F(B_h^{n+1},N_h^{n+1})+f^{n+1},\psi-B_h^{n+1})  \; \;\forall \psi \in K_h, \label{eq:c} 
 \end{multline}
 \begin{align}
(\partial N_h^n, \chi)+(D_N \nabla N_h^{n+1}, \nabla \chi)& = (G(B_h^{n+1}, N_h^{n+1})+g^{n+1}, \chi)& \forall \chi \in V_h. \label{eq:nnv}\\
\|B_h^0-B_{init}\|_0 &\le Ch, \label{eq:d}\\
B_h^0&=I_hB_{init}\\
N_h^0&= I_hN_{init}.
\end{align}
\end{subequations}
Here $I_h$ is the local smoother defined in the sequel. 

\subsection{Existence and uniqueness of solutions}
Here we prove that the fully discrete problem \eqref{eq:DPVI} has a unique solution under mild assumptions on the size of $\Delta t$. 

\begin{lemma}{Assume that (i) $\gamma=\min\{\mu_1, \nu_1\} >2 MC_{PF}^2$}. If this does not hold, (ii) assume that  $\Delta t$ is sufficiently small, \add{and in particular that $\Delta t < \frac{C_{PF}^2}{2MC_{PF}^2 -\gamma}$}. Then the problem \eqref{eq:DPVI} has a unique solution. 
\end{lemma}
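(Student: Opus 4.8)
The plan is to reduce \eqref{eq:DPVI} to a one-step problem: fixing the level $n$ and treating $B_h^n\in K_h$, $N_h^n\in V_h$ as given data, I seek $(u,w)=(B_h^{n+1},N_h^{n+1})\in K_h\times V_h$ as the unique fixed point of a decoupling map, and then induct on $n=0,\dots,N_T-1$ (the base case is the prescribed initial data $B_h^0=I_hB_{init}\in K_h$, $N_h^0=I_hN_{init}\in V_h$, consistent with the bound $B_{init}\le B^\ast$ in Assumption~\ref{as:data}). Define $\Phi:K_h\times V_h\to K_h\times V_h$ by freezing the reaction terms: given $(\bar u,\bar w)$, let $(u,w)=\Phi(\bar u,\bar w)$ solve the two problems obtained from \eqref{eq:c}--\eqref{eq:nnv} upon replacing $F(B_h^{n+1},N_h^{n+1})$ by $F(\bar u,\bar w)$ and $G(B_h^{n+1},N_h^{n+1})$ by $G(\bar u,\bar w)$. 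Each frozen problem carries the bounded, coercive bilinear form $\tfrac1{\Delta t}(\cdot,\cdot)+(D_B\nabla\cdot,\nabla\cdot)$ (resp.\ with $D_N$), so the frozen inequality for $u$ is uniquely solvable in $K_h$ by the Lions--Stampacchia theorem and the frozen equation for $w$ is uniquely solvable in $V_h$ by Lax--Milgram; hence $\Phi$ is well defined and maps into $K_h\times V_h$.

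The core is a contraction estimate. Take inputs $(\bar u_i,\bar w_i)$ with outputs $(u_i,w_i)$, $i=1,2$, and set $e=u_1-u_2$, $\eta=w_1-w_2$, $\bar e=\bar u_1-\bar u_2$, $\bar\eta=\bar w_1-\bar w_2$. I would test the frozen inequality for $u_1$ with $\psi=u_2\in K_h$ and that for $u_2$ with $\psi=u_1\in K_h$---admissible because $K_h$ is convex---and add; the $B_h^n$ and $f^{n+1}$ terms cancel, leaving
\[
\tfrac1{\Delta t}\|e\|_0^2+(D_B\nabla e,\nabla e)\le\big(F(\bar u_1,\bar w_1)-F(\bar u_2,\bar w_2),\,e\big).
\]
Subtracting the two frozen equations for $w$ and testing with $\chi=\eta$ gives the analogous relation for $\eta$. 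Adding these, bounding the elliptic terms below by $\gamma(\|\nabla e\|_0^2+\|\nabla\eta\|_0^2)$ via the ellipticity bounds in Assumption~\ref{as:data}, applying the Lipschitz bounds on $F,G$ from Assumption~\ref{as:data}, and using Poincar\'e--Friedrichs \eqref{eq:PF} on both sides, I arrive, with $S^2=\|e\|_0^2+\|\eta\|_0^2$ and $\bar S^2=\|\bar e\|_0^2+\|\bar\eta\|_0^2$, at
\[
\Big(\tfrac1{\Delta t}+\tfrac{\gamma}{C_{PF}^2}\Big)S^2\le 2M\,\bar S\,S,
\qquad\text{hence}\qquad
S\le\frac{2M\,C_{PF}^2\,\Delta t}{C_{PF}^2+\gamma\,\Delta t}\,\bar S.
\]

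The contraction factor $q(\Delta t)=\dfrac{2MC_{PF}^2\,\Delta t}{C_{PF}^2+\gamma\Delta t}$ satisfies $q<1$ exactly when $\Delta t\,(2MC_{PF}^2-\gamma)<C_{PF}^2$. Under hypothesis (i), $\gamma>2MC_{PF}^2$, the left side is negative and $q<1$ for every $\Delta t>0$; under hypothesis (ii) the inequality rearranges to $\Delta t<C_{PF}^2/(2MC_{PF}^2-\gamma)$. In either case $\Phi$ is a contraction on the closed convex---hence complete---set $K_h\times V_h$, so Banach's fixed-point theorem furnishes a unique $(u,w)$, which is exactly the unique solution of \eqref{eq:c}--\eqref{eq:nnv} at level $n+1$; the induction closes.

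I expect the contraction estimate for the inequality \eqref{eq:c} to be the main obstacle: one must work with the VI in one-sided form and use the convexity of $K_h$ to insert each solution as a test function in the other's inequality, then merge the outcome with the $N$-equation into a single product-space bound. A more delicate bookkeeping point, needed to recover precisely the stated threshold on $\Delta t$ rather than a cruder $\Delta t$-independent one, is to \emph{retain} the coercive contribution $\tfrac1{\Delta t}\|e\|_0^2$ coming from the discrete time derivative instead of discarding it; this term is what produces the denominator $C_{PF}^2+\gamma\Delta t$ in $q(\Delta t)$ and thus the sharp bound in (ii). Finally, to invoke the global Lipschitz constant $M$ one tacitly extends $F,G$ Lipschitz-continuously to all of $\mathbb{R}^2$ (or checks that the iterates stay in $\mathbb{R}\times\mathbb{R}^+$), which is harmless for the Monod nonlinearity \eqref{eq:monod}.
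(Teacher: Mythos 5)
Your argument is correct, and it reaches exactly the paper's threshold $A>0$, i.e.\ $\Delta t\,(2MC_{PF}^2-\gamma)<C_{PF}^2$, but by a genuinely different route. The paper does not decouple the system: it assembles a single operator $L:\mathbf{H}\to\mathbf{H}'$ on the product space whose components are the time-discretized bilinear forms $a_{\Delta t}$ and $b_{\Delta t}$ (with the reaction terms kept inside, not frozen), verifies that $L$ is continuous, coercive, and strictly monotone --- the strict monotonicity estimate $\left(L(B_1,N_1)-L(B_2,N_2),(B_1-B_2,N_1-N_2)\right)\ge A\left(\|B_1-B_2\|_0^2+\|N_1-N_2\|_0^2\right)$ with $A=C_{PF}^2-2\Delta tMC_{PF}^2+\Delta t\gamma$ being the exact analogue of your contraction bound --- and then invokes abstract existence theory for monotone operators on convex sets (Barbu, Showalter). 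Your decoupling map $\Phi$ with frozen reaction terms, solved subproblem-by-subproblem via Lions--Stampacchia and Lax--Milgram and closed by the Banach fixed-point theorem, is more constructive: it simultaneously proves well-posedness and exhibits a convergent Picard/decoupling iteration one could actually implement, and your observation about retaining the $\tfrac1{\Delta t}\|e\|_0^2$ term is precisely what makes the contraction factor $q(\Delta t)=2MC_{PF}^2\Delta t/(C_{PF}^2+\gamma\Delta t)$ sharp enough to recover the paper's condition rather than a cruder one. The trade-off is that the monotone-operator route generalizes more directly to settings where a naive decoupling iteration need not contract (e.g.\ infinite-dimensional or less regular data), whereas your route stays entirely within elementary VI theory plus a fixed-point argument; both deliver uniqueness, yours from the uniqueness of the fixed point, the paper's from strict monotonicity. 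I see no gap in your reasoning: testing each frozen inequality with the other solution is legitimate since both $u_1,u_2\in K_h$, the cancellation of the $B_h^n$ and $f^{n+1}$ terms is exact, and $K_h\times V_h$ is closed in a finite-dimensional space, hence complete.
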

\begin{proof}
%%%%---------
For $n=0,\ldots,N_T-1$, ~(\ref{eq:DPVI}) can be written as 
\begin{subequations}
\begin{align*}
    a_{\Delta t}(N_h^{n+1};B_h^{n+1}, \psi-B_h^{n+1})&\ge (B_h^n+\Delta t f^{n+1}, \psi-B_h^{n+1}) \;\; \forall \psi \in K_h,\\
     b_{\Delta t}(B_h^{n+1};N_h^{n+1},\chi)&=(N_h^n+\Delta t g^{n+1},\chi)\; \forall \chi \in V_h,
\end{align*}
\end{subequations}
where $a_{\Delta t}:V_h\times V_h\longrightarrow \mathbb{R}$, $b_{\Delta t}:V_h\times V_h\longrightarrow \mathbb{R}$ are defined as 
\begin{subequations}
\begin{align*}
    a_{\Delta t}(N;B,\psi)&=(B,\psi)-\Delta t(F(B,N),\psi)+\Delta t(D_B\nabla B,\nabla \psi), \;\; \forall (B,\psi)\in V_h\times V_h,\\
    b_{\Delta t}(B;N, \chi)&=(N,\chi) -\Delta t(G(B,N), \chi)+\Delta t (D_N \nabla N, \nabla \chi), \;\; \forall (N,\chi)\in V_h\times V_h.
\end{align*}
\end{subequations}
In what follows we suppress the indices $h$, and $n$+$1$ in $N_h^{n+1},B_h^{n+1}$. Define $L:\mathbf{H}\longrightarrow \mathbf{H}^\prime$ as 
\[
(L(B,N),(\psi,\chi))=a_{\Delta t}(N;B,\psi)+b_{\Delta t}(B;N,\psi) \;\; \forall (B,N), (\psi,\chi) \in \mathbf{H}.
\]
We will show that $L$ is monotone, continuous, and coercive. From this it follows that $L$ is demicontinuous, and we can apply (\cite{Barbu12} Corollary 2.8, page 73) to prove existence of the solution in $K_h \times V_h$.  If $L$ is also strictly monotone, then the solution is unique; see, e.g, the existence proof in  (\cite{Showater97}, Corollary 7.1, page 84). 

To show continuity, we apply Assumption \ref{as:data}, parts \ref{c:Diff} and \ref{c:FG}, and Cauchy-Schwarz inequality and get
\[
|\left(L(B,N),(\psi,\chi)\right)|\le C \|(B,N)\|_{\mathbf{H}}\|(\psi, \chi)\|_{\mathbf{H}}, \; \mbox{for some constant $C>0$}.
\]
Recall $L$ is strictly monotone if 
\ba
\label{eq:monotone}
\left(L(B_1,N_1)-L(B_2,N_2), (B_1,N_1)-(B_2,N_2)\right)>0,\;\;\forall (B_1,N_1)\ne (B_2,N_2) \;\mbox{in}\;\; \mathbf{H}.
\ea
Also, $L$ is coercive if   
$\mbox{for some }\;\;(\psi_0,\chi_0)\in K_h\times V_h$
\ba
\label{eq:coercive}
\frac{\left(L(B,N), (B,N)-(\psi_0,\chi_0)\right)}{\|(B,N)\|_{\mathbf{H}}} \longrightarrow +\infty\;\;\mbox{as} \;\;\|(B,N)\|_{\mathbf{H}}\rightarrow \infty.
\ea

To show \eqref{eq:monotone},
we rewrite
\begin{multline}
\left(L(B_1,N_1)-L(B_2,N_2), (B_1,N_1)-(B_2,N_2) \right)
=\|B_1-B_2\|_0^2
\\
+\Delta t \left(D_B\nabla(B_1-B_2), \nabla (B_1-B_2) \right)
+\|N_1-N_2\|_0^2
\\
+\Delta t \left(D_N\nabla(N_1-N_2), \nabla (N_1-N_2)\right)%\nonumber
%\\
- \Delta t \left(F(B_1,N_1)-F(B_2,N_2), B_1-B_2\right)%\nonumber
\\
- \Delta t \left(G(B_1,N_1)-G(B_2,N_2), N_1-N_2\right).\label{eq:M1}
\end{multline}
Using Assumption \ref{as:data} part \ref{c:FG}, Cauchy-Schwarz inequality and the inequality \eqref{eq:eps}, the fifth term can be bounded as 
\begin{multline}
- \Delta t (F(B_1,N_1)-F(B_2,N_2), B_1-B_2)
\ge -\Delta t (|F(B_1,N_1)-F(B_2,N_1)|, |B_1-B_2|) %\nonumber
\\
- \Delta t (|F(B_2,N_1)-F(B_2,N_2)|, |B_1-B_2|)%\nonumber
%\\
\ge-\frac{3\Delta t}{2} M \|B_1-B_2\|_0^2 -\frac{\Delta t}{2}M \|N_1-N_2\|_0^2.\label{eq:M2}
\end{multline}
%%%
Similarly, the last term is estimated by
%%%
\begin{multline}
- \Delta t (G(B_1,N_1)-G(B_2,N_2), N_1-N_2)
\ge -\Delta t (|G(B_1,N_1)-G(B_2,N_1)|, |N_1-N_2|) %\nonumber
\\
- \Delta t (|G(B_2,N_1)-G(B_2,N_2)|, |N_1-N_2|)%\nonumber
%\\
\ge-\frac{3\Delta t}{2} M \|N_1-N_2\|_0^2 -\frac{\Delta t}{2}M \|B_1-B_2\|_0^2.\label{eq:M3}
\end{multline}
%%%
Combining \eqref{eq:M1}--\eqref{eq:M3}, and using Assumption \ref{as:data} part \ref{c:Diff} and \eqref{eq:PF}, we finally obtain
\ba
\label{eq:estimate}
\left(L(B_1,N_1)-L(B_2,N_2), (B_1-B_2, N_1-N_2)\right)
\ge
A \left(\|B_1-B_2\|_0^2+\|N_1-N_2\|_0^2\right),
\ea
with $A=(C_{PF}^2-2\Delta t MC_{PF}^2+\Delta t \gamma)$. This estimate implies \eqref{eq:monotone} 
as long as $A>0$. \add{In turn, this is guaranteed if either (i) or (ii) holds.  In fact (i) requires that the diffusivities are large enough. If this is not the case, (ii) requires a small enough $\Delta t$. }

{To show \eqref{eq:coercive}, {set $(\psi_0, \chi_0)=(0,0)$, and apply Assumption \ref{as:data} parts \ref{c:Diff}--\ref{c:FG}, and use inequality \eqref{eq:PF}. Then for all  $ (B,N)\in \mathbf{H}$, we have \begin{multline*}
    \left(L(B,N), (B,N)-(\psi_0,\chi_0)\right)=\|B\|_0^2-\Delta t(F(B,N),B)+\Delta t(D_B\nabla B, \nabla B)\\
    +\|N\|_0^2-\Delta t(G(B,N),N)+\Delta t(D_N\nabla N, \nabla N)\\
    \ge \|B\|_0^2+\|N\|_0^2 -\Delta t M(\|B\|_0^2+\|N\|_0^2)+\Delta t\gamma (\| B\|_1^2+\|N\|_1^2)\\
    =(1-\Delta t M)(\|B\|_0^2+\|N\|_0^2)+\Delta t\gamma (\| B\|_1^2+\|N\|_1^2).
    %\ge \left(Ch^2-\Delta t(MC_{PF}^2 -\gamma)\right)\|(B,N)\|_{\mathbf{H}}^2
\end{multline*}
If $\Delta t <\frac{1}{M}$, then we are done. Otherwise, apply \eqref{eq:PF}, and we have
\begin{multline*}
    \left(L(B,N), (B,N)-(\psi_0,\chi_0)\right)\ge (C_{PF}^2+\Delta t(\gamma -MC_{PF}^2))\|(B,N)\|_{\mathbf{H}}^2
    \ge A \|(B,N)\|_{\mathbf{H}}^2,
\end{multline*}
where $A$ as above. Thus, \eqref{eq:coercive} is satisfied if $\gamma > 2MC_{PF}^2$, i.e. condition (i) holds or if  $\Delta t < \frac{C_{PF}^2}{2MC_{PF}^2 -\gamma}$, which is condition (ii),
and the proof is complete.} 
 }
%% ---- -------
\end{proof}
%-------------------------------------------------------

%%-----------------------------------------------------
\subsection{Assumptions on free boundary}
The subsequent proof of convergence of the scheme \eqref{eq:DPVI} makes an assumption about the behavior of the free boundary in time. This assumption seems entirely reasonable for biofilm growth since we expect $\Omega^*$ to grow in time and its boundary to ``move forward'' rather than ``oscillate''.

Following \cite{Johnson1976} we define
\ba
\label{eq:Dn}
 D_n=\bigcup_{t\in J_n} \Omega^-(t)\cup \Omega^-(t_{n+1})\backslash \overline{\Omega^-(t)\cap\Omega^-(t_{n+1})}; \;\; n=0,\ldots N_T-1, 
 \ea 
and assume as in  (\cite{Johnson1976}, p. 601, condition 2.3) that there exists a constant $\delta$ such that
%%%
\begin{equation}
\sum_{n=0}^{N_T-1}m(D_n) \le \delta, \;\; \label{eq:g}
\end{equation}
where for a measurable $D\subset \mathbb{R}^2$, $m(D)$ denotes its Lebgesgue measure. 

We provide two examples to illustrate this assumption. 

\begin{example}
\label{ex:dn}
Consider $\Omega=(0,1)^2$, and assume that the interface $\Gamma(t) = \partial \Omega^-(t) \cap \partial \Omega^*(t)$ is a set of points $(x^*(t),y)$ with $x^*(t)$ decreasing in time starting from some $x^*(0)$, and with $y\in (0,1)$. Then  $D_n=(x^*(t_{n+1}),x^*(t_n))\times (0,1)$. As $N_T\uparrow$ we find that $\sum_n m(D_n)=m\left((x^*(t_{n+1}),x^*(0))\right)$ is increasing with $n$, but is bounded by $m(\Omega^-(0))$, thus \eqref{eq:g} holds. \end{example}
 
More generally, \eqref{eq:g} asserts that $B(x,t)$ does not change too frequently from reaching $B^*$ to lying strictly below $B^*$ or vice versa.  In particular, it avoids the following scenario. 

\begin{example}
Consider $\Omega$ as in Ex.~\ref{ex:dn}, and the scenario for some partition $\Upsilon$ of $J$ in which for every $n$, $D_n=(a,b) \times (0,1)$ with some fixed $0<a<b<0$. This scenario corresponds to the free boundary $\Gamma(t)$ oscillating between  $a$  and $b$.  Then with $N_T \uparrow$ and for finer partitions of $\Upsilon$ we find that $\sum_n m(D_n)=N_T m((a,b))$ is unbounded. 
\end{example}

%%%%%%%%%%%%%
\subsection{Error Estimate}
In this section we prove an error estimate for the approximation of solutions to \eqref{eq:PVI}. We follow the strategy of Johnson \cite{Johnson1976} which we adapt for the product space $V_h \times V_h$ and our coupled system. The main challenge is to handle the consistency error arising due to the nonlinearity of $F,G$ and the coupled nature of the system. 

We first state the main result. Next we state some auxiliary technical results, and proceed with the proof. 
Throughout, $C$ denotes a generic \add{positive} constant not depending on $\Delta t$ and $h$. We also define
\bas
e_B^n=B^n-B_h^n, e_N^n=N^n-N_h^n, n=0,\ldots, N_T. 
\eas
We first state an auxiliary result which follows from 
\eqref{eq:g}. 
%%%%
\begin{lemma}[Result from \cite{Johnson1976}, page 605; see also \cite{Azharthesis} for elaborated details]\label{lam:4}
Let $\psi\in L^\infty(W^{2,p}), \;\; 1\le p<\infty$. and assume \eqref{eq:g} holds. Then $\forall$ $\epsilon >0$, and for $m=0,\ldots, N_T$ 
\begin{equation*}
\sum_{n=0}^{m-1} \Delta t \int_{D_n} | \psi(x,t_{n+1})|\; d x  \le  \frac{\epsilon}{2}\max_n \|\psi \|_0^2 + \frac{\epsilon}{2}\sum_{n=0}^{m-1} \| \psi\|_1^2 \Delta t +C(\log \Delta t^{-1})^{1/2} \Delta t ^{3/2}.
\end{equation*}
\end{lemma}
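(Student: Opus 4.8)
The plan is to estimate the integral $\sum_{n} \Delta t \int_{D_n} |\psi(x,t_{n+1})|\, dx$ by splitting it via H\"older's inequality in a way that trades off the small measure of the sets $D_n$ against the $W^{2,p}$ regularity of $\psi$. The key structural input is \eqref{eq:g}, namely $\sum_n m(D_n) \le \delta$, which says the free boundary sweeps out only a bounded total area. First I would apply H\"older's inequality on each $D_n$ to write
\[
\int_{D_n} |\psi(x,t_{n+1})|\, dx \le m(D_n)^{1-1/q}\, \|\psi(\cdot,t_{n+1})\|_{L^q(D_n)}
\]
for a suitable exponent $q$. Since $\psi \in L^\infty(W^{2,p})$, in dimension $d=2$ the Sobolev embedding $W^{2,p} \hookrightarrow L^\infty$ (for $p>1$) or the embedding into high $L^q$ spaces gives control of $\|\psi(\cdot,t_{n+1})\|_{L^q}$ uniformly in $n$. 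The factor $m(D_n)^{1-1/q}$ is what will ultimately be summed using \eqref{eq:g}.

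Next I would sum over $n$. The delicate point is that $\sum_n m(D_n)^{1-1/q}$ is not directly controlled by $\sum_n m(D_n) \le \delta$ when the exponent $1-1/q < 1$, so one cannot simply bound term-by-term. Instead I would use a second H\"older inequality over the index $n$, separating the $\Delta t$ weights from the measure factors, i.e. writing $\Delta t\, m(D_n)^{\alpha}$ as a product and applying the discrete H\"older inequality across $n = 0,\dots,m-1$. This is precisely where the factor $(\log \Delta t^{-1})^{1/2}\Delta t^{3/2}$ is expected to emerge: the logarithmic term is the signature of the borderline Sobolev embedding in two dimensions (the $W^{2,p} \hookrightarrow L^\infty$ embedding degenerates logarithmically as $p \to \infty$), and optimizing the exponent $p$ against the number of time steps $N_T = T/\Delta t$ produces the $(\log \Delta t^{-1})^{1/2}$ weight multiplied by the power $\Delta t^{3/2}$ coming from combining one factor of $\Delta t$ with the $\Delta t^{1/2}$ gained from the bounded total measure. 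The $\frac{\epsilon}{2}\max_n\|\psi\|_0^2$ and $\frac{\epsilon}{2}\sum_n \|\psi\|_1^2\,\Delta t$ terms on the right-hand side would be generated by applying the elementary inequality \eqref{eq:eps} to absorb a portion of the norm of $\psi$ with an $\epsilon$-small coefficient, keeping the remaining $\psi$-dependence inside the constant $C$.

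The main obstacle I anticipate is the careful tracking of the interplay between three competing scales: the measure bound $\delta$ from \eqref{eq:g}, the number of time steps $N_T \sim \Delta t^{-1}$, and the degenerating constant in the two-dimensional Sobolev embedding as the integrability exponent is sent to infinity. Getting the sharp exponent $\Delta t^{3/2}$ together with the correct logarithmic power $(\log \Delta t^{-1})^{1/2}$ requires choosing the embedding exponent $p$ (or equivalently $q$) as a function of $\Delta t$ in an optimal way, balancing the growth of the embedding constant against the decay supplied by $\Delta t$ and $\delta$. This is the genuinely technical heart of the argument, and I would rely on the detailed version in \cite{Johnson1976} and the elaboration in \cite{Azharthesis}, adapting the exponent bookkeeping to our setting rather than rederiving it from scratch.
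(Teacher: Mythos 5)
The paper does not actually prove Lemma~\ref{lam:4}; it is imported verbatim from \cite{Johnson1976} (p.~605), with details in \cite{Azharthesis}, so there is no in-text argument to compare yours against line by line. Measured against Johnson's argument, your sketch has the correct architecture: H\"older's inequality on each $D_n$ producing a factor $m(D_n)^{1-1/q}$, the bound \eqref{eq:g} to control the sum of these factors (and you are right that this cannot be done termwise and needs a second, discrete H\"older step over $n$), a two-dimensional Sobolev-type inequality whose constant degenerates as the integrability exponent grows, a choice of that exponent of size $\log \Delta t^{-1}$ to balance the degeneration against $\Delta t$, and a Young-type inequality in the spirit of \eqref{eq:eps} to peel off the two $\epsilon$-weighted terms. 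Deferring the final exponent bookkeeping to the cited sources is acceptable here, since the paper itself does exactly that.

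The one concrete misstep is the embedding you invoke. You appeal to $W^{2,p}\hookrightarrow L^\infty$ ``degenerating logarithmically as $p\to\infty$''; that embedding in fact improves as $p$ grows, and, more importantly, a proof routed through it would leave $\|\psi\|_{W^{2,p}}$ in the constant $C$. That is fatal for the way the lemma is used: in the proof of Theorem~\ref{the:convergence} it is applied with $\psi=e_B^{n+1}=B^{n+1}-B_h^{n+1}$, whose $W^{2,p}$ norm is not bounded uniformly in $h$ (the discrete solution is only piecewise linear), so the hypothesis $\psi\in L^\infty(W^{2,p})$ cannot be what carries the estimate. The right-hand side of the lemma involves only $\|\psi\|_0$ and $\|\psi\|_1$, which tells you the inequality that must be used is the borderline two-dimensional embedding $H^1(\Omega)\hookrightarrow L^q(\Omega)$ with constant $O(q^{1/2})$, in its interpolated form $\|\psi\|_{L^q}\le Cq^{1/2}\|\psi\|_0^{2/q}\|\psi\|_1^{1-2/q}$; it is this power-type (not logarithmic) degeneration, evaluated at $q\sim\log\Delta t^{-1}$, that produces the $(\log\Delta t^{-1})^{1/2}$ factor, and it is the two interpolation exponents that generate, via a three-factor Young inequality, precisely the pair $\frac{\epsilon}{2}\max_n\|\psi\|_0^2$ and $\frac{\epsilon}{2}\sum_n\|\psi\|_1^2\Delta t$. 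With that substitution your plan matches Johnson's; as written, it would prove a different statement that could not be applied to the error function where the paper needs it.
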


\begin{theorem}
\label{the:convergence}{
Assume that Assumption \ref{as:data} and condition \eqref{eq:g} hold, \add{and that $\Delta t < \frac{1}{4M}$}.  Then there exists a constant $C$ independent of $\Delta t$ and $h$ such that
\begin{equation}
\max_n\left(\|e_B^n\|_0+\|e_N^n\|_0\right)+\sqrt{\sum_n\left(\|e_B^n\|_1^2+\|e_N^n\|_1^2\right)\Delta t} \le C[(\log (\Delta t)^{-1})^{1/4} \Delta t ^{3/4}+h].
\end{equation}}
\end{theorem}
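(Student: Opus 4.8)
The plan is to adapt Johnson's energy-estimate strategy for the scalar PVI to the coupled system by testing the two discrete relations \eqref{eq:c}--\eqref{eq:nnv} against suitable functions and summing the resulting inequalities. First I would introduce the elliptic (or $H^1_0$) projection, or alternatively the interpolant $I_h$, and split each error as $e_B^n = (B^n - \Pi_h B^n) + (\Pi_h B^n - B_h^n) =: \rho_B^n + \theta_B^n$, and likewise for $e_N^n$. The projection-error parts $\rho_B,\rho_N$ are controlled by standard FE approximation theory (giving the $O(h)$ contribution in the final bound using Assumption \ref{as:data}\ref{c:sol}, i.e.\ $B,N \in L^\infty(W^{2,p})$), so the real work is to estimate the discrete parts $\theta_B,\theta_N$.

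\medskip

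For the $N$-equation, which is a genuine equality \eqref{eq:nnv}, I would test with $\chi = \theta_N^{n+1}$, subtract the continuous weak form \eqref{eq:nv} evaluated at $t_{n+1}$, and handle the time-discretization defect $\partial N^n - \tfrac{\partial N}{\partial t}(t_{n+1})$ by Taylor expansion, which is where $\tfrac{\partial^2 N}{\partial t^2} \in L^2(Q)$ enters and produces an $O(\Delta t)$ term. The reaction terms $G(B,N)-G(B_h,N_h)$ are split and bounded via the Lipschitz condition (Assumption \ref{as:data}\ref{c:FG}) by $M(\|e_B\|_0 + \|e_N\|_0)$, which couples the two error equations. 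For the $B$-inequality \eqref{eq:c} the standard PVI trick is to choose the test function $\psi = B_h^{n+1} + (\text{something})$ cleverly so that the constraint term is controlled; following Johnson one tests with an admissible $\psi$ built from the continuous solution and the interpolant, and the crucial consistency term involving the free boundary is precisely the quantity estimated by Lemma~\ref{lam:4} over the sets $D_n$. This is what generates the anomalous $(\log(\Delta t)^{-1})^{1/4}\Delta t^{3/4}$ rate rather than a clean $O(\Delta t)$.

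\medskip

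After combining the two tested relations I would arrive at a discrete inequality of the form
\[
\|\theta_B^{n+1}\|_0^2 + \|\theta_N^{n+1}\|_0^2 + c\,\Delta t\bigl(\|\theta_B^{n+1}\|_1^2 + \|\theta_N^{n+1}\|_1^2\bigr) \le \|\theta_B^{n}\|_0^2 + \|\theta_N^{n}\|_0^2 + \Delta t\,\mathcal{E}_n,
\]
where $\mathcal{E}_n$ collects the consistency, projection, and coupling terms. The coupling and reaction terms are absorbed using \eqref{eq:eps} and the Poincaré--Friedrichs inequality \eqref{eq:PF}; the hypothesis $\Delta t < \tfrac{1}{4M}$ guarantees that the zeroth-order reaction contributions can be moved to the left without destroying positivity of the coefficient on $\|\theta\|_0^2$. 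Summing over $n$ from $0$ to $m-1$, using the initial bound \eqref{eq:d}, and invoking the discrete Gronwall inequality then yields the stated estimate, where the $\max_n$ and the $\ell^2(H^1)$ terms come from keeping both the pointwise-in-time and the accumulated gradient energy.

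\medskip

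The main obstacle, and the step I would dwell on, is the treatment of the variational-inequality consistency error for $B$: unlike the linear equality case one cannot simply subtract the continuous and discrete equations, because the inequality \eqref{eq:c} only provides a one-sided bound and the multiplier $\Lambda \in \partial I_{\R^*}(B)$ is active exactly on the free boundary. The term that survives is an integral of $\tfrac{\partial B}{\partial t} + \Lambda - F - f$ against the error, supported on the symmetric difference sets $D_n$ where the discrete and continuous coincidence sets disagree; controlling this is exactly the role of assumption \eqref{eq:g} and Lemma~\ref{lam:4}, and getting the bookkeeping right so that the $\tfrac{\epsilon}{2}\sum \|\psi\|_1^2\Delta t$ term in that lemma can be absorbed into the left-hand gradient energy (choosing $\epsilon$ small) is the delicate part. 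The additional complication relative to Johnson is that $F\neq 0$ and $D_B$ is variable, so the Lipschitz splitting of $F$ introduces cross terms in $e_N$ that must be carried through the same Gronwall argument without circularity, which is why the two equations must be summed and estimated simultaneously rather than sequentially.
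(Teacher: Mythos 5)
Your proposal follows essentially the same route as the paper: Johnson's energy argument adapted to the coupled system, with the continuous and discrete inequalities tested against $B_h^{n+1}$ and $I_hB^{n+1}$ respectively, the free-boundary consistency term localized to the sets $D_n$ and controlled by assumption \eqref{eq:g} via Lemma~\ref{lam:4} (which is indeed the source of the $(\log(\Delta t)^{-1})^{1/4}\Delta t^{3/4}$ rate), the Lipschitz splitting of $F,G$ producing cross terms in $e_B,e_N$ that force the two estimates to be summed and closed together, the condition $\Delta t<\tfrac{1}{4M}$ used to kick the zeroth-order terms to the left, and discrete Gronwall to finish. The only cosmetic difference is your $\rho+\theta$ decomposition, whereas the paper works with the full errors $e_B,e_N$ directly and lets the interpolation errors $\eta,\xi$ appear in the individual terms; this does not change the substance of the argument.
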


\subsubsection{Auxiliary results adapted from \cite{Johnson1976}}
In the proof we will use auxiliary technical results following (\cite{Johnson1976}, page 602). In particular, we use the approximation operator $I_h$ constructed therein which applies to functions not necessarily defined pointwise. One smooths them out first, then interpolates. We only need formal properties of the operator $I_h$,  (\cite{Johnson1976}, page 602) which we restate here.  
%%%%%%%%%%%
\begin{definition} (~\cite{Johnson1976}, Lemmas 1 and 2).\label{def:1}
For each $h>0$, let $I_h:H_0^1(\Omega)\rightarrow V_h$ denote a linear operator with the following properties:
\begin{enumerate}[(i)]
  \item $\|\psi-I_h \psi\|_j \le Ch^{k-j}\|\psi\|_k,\; j=0,1,\; k=1,2,$
    \item $I_h\psi \in K_h$ if $\psi \in K.$
\end{enumerate}
\end{definition}
This operator and approximation properties are needed, e.g., to handle terms involving $\frac{\partial B}{\partial t}$. We mention that a similar goal of approximating non-smooth functions is applied in the  context of a-posteriori error estimates; see, e.g. (\cite{ErnGuer04} page 71). 

Now we define $\eta(t)=B(\cdot,t)-I_h B(\cdot,t)$, $\xi(t) = N(\cdot,t)-I_h N(\cdot,t)$ for $ t \in J$. The operator $I_h$ commutes with the time differentiation, thus the following approximation properties can be stated, as  adapted from (\cite{Johnson1976}, page 603).
%%%
\begin{lemma} (\cite{Johnson1976}, page 603)\label{lam:2}
\begin{enumerate}[(i)]
\item $\max_n\|\eta^n\|_0 \le Ch \|B\|_{L^\infty (J;H^1(\Omega))}, $
and 
 $\max_n\|\xi^n\|_0 \le Ch \|N\|_{L^\infty (J;H^1(\Omega))}. $
%--------------------------
\item $\left \|\frac{\partial \eta}{\partial t}\right \|_{L^2(J_n;L^2(\Omega))} \le Ch \left \|\frac{\partial B}{\partial t}\right\|_{L^2(J_n;H^1(\Omega))},$
and 
$\left \|\frac{\partial \xi}{\partial t}\right \|_{L^2(J_n;L^2(\Omega))} \le Ch \left \|\frac{\partial N}{\partial t}\right\|_{L^2(J_n;H^1(\Omega))}.$
%-------------------------------
\item $\|\partial \eta^n  \|_0 \le  C (\Delta t)^{-1/2}h \left \| \frac {\partial B}{\partial t}\right \|_{L^2(J_n;H^1 (\Omega))},$ and $\|\partial \xi^n  \|_0 \le  C (\Delta t)^{-1/2}h \left \| \frac {\partial N}{\partial t}\right \|_{L^2(J_n;H^1 (\Omega))}.$
%---------------------------------
\end{enumerate}
\end{lemma}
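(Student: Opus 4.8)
The plan is to derive all three estimates directly from the single approximation property in Definition~\ref{def:1}(i), applied \emph{pointwise in the time variable}, together with the fact (stated just before the lemma) that $I_h$ is a fixed linear operator acting only in space and hence commutes with $\partial_t$. The regularity needed on the right-hand sides is supplied entirely by Assumption~\ref{as:data} part~\ref{c:sol}: $B,N\in L^\infty(W^{2,p})\hookrightarrow L^\infty(H^1)$ gives finiteness of the norms in part (i), while $\tfrac{\partial B}{\partial t},\tfrac{\partial N}{\partial t}\in L^2(H_0^1)$ justifies the time integrals in parts (ii) and (iii). Since the two claims in each item are identical with $B$ replaced by $N$, I would prove the $\eta$-statements and remark that the $\xi$-statements follow verbatim.

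For part (i), I would fix $t_n$ and apply Definition~\ref{def:1}(i) with $j=0,\ k=1$ to $\eta^n=B^n-I_hB^n$, obtaining $\|\eta^n\|_0\le Ch\|B^n\|_1\le Ch\|B\|_{L^\infty(J;H^1(\Omega))}$; taking the maximum over $n$ closes the estimate. For part (ii), the commutation property gives $\tfrac{\partial \eta}{\partial t}(t)=\tfrac{\partial B}{\partial t}(t)-I_h\tfrac{\partial B}{\partial t}(t)$ for a.e. $t$, so Definition~\ref{def:1}(i) with $j=0,\ k=1$ yields the pointwise bound $\big\|\tfrac{\partial \eta}{\partial t}(t)\big\|_0\le Ch\big\|\tfrac{\partial B}{\partial t}(t)\big\|_1$. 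Squaring and integrating over $J_n$ then gives
\[
\Big\|\tfrac{\partial \eta}{\partial t}\Big\|_{L^2(J_n;L^2(\Omega))}^2
=\int_{J_n}\Big\|\tfrac{\partial \eta}{\partial t}(t)\Big\|_0^2\,dt
\le C^2h^2\int_{J_n}\Big\|\tfrac{\partial B}{\partial t}(t)\Big\|_1^2\,dt
=C^2h^2\Big\|\tfrac{\partial B}{\partial t}\Big\|_{L^2(J_n;H^1(\Omega))}^2,
\]
and taking square roots gives the claim.

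For part (iii), the key observation is that the backward difference is an integral average of the time derivative: since $t\mapsto \eta(\cdot,t)$ is absolutely continuous as an $L^2(\Omega)$-valued map (a consequence of $\tfrac{\partial B}{\partial t}\in L^2(H_0^1)$), one has $\partial \eta^n=\tfrac{1}{\Delta t}\int_{t_n}^{t_{n+1}}\tfrac{\partial \eta}{\partial t}(t)\,dt$. Bounding the $L^2(\Omega)$-norm by Minkowski's integral inequality and then applying Cauchy--Schwarz in time to extract a factor $(\Delta t)^{1/2}$ gives
\[
\|\partial \eta^n\|_0
\le \frac{1}{\Delta t}\int_{t_n}^{t_{n+1}}\Big\|\tfrac{\partial \eta}{\partial t}(t)\Big\|_0\,dt
\le (\Delta t)^{-1/2}\Big\|\tfrac{\partial \eta}{\partial t}\Big\|_{L^2(J_n;L^2(\Omega))},
\]
and substituting the bound from part (ii) produces the stated $C(\Delta t)^{-1/2}h$ estimate. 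None of these steps is genuinely hard; the only points meriting care are the commutation of $I_h$ with $\partial_t$ (which hinges on $I_h$ being a single time-independent spatial smoothing–interpolation operator of the type constructed in~\cite{Johnson1976}) and the representation of the backward difference as the integral of $\tfrac{\partial \eta}{\partial t}$ over $J_n$, which is exactly where the temporal regularity from Assumption~\ref{as:data} part~\ref{c:sol} is used.
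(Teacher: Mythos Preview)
Your argument is correct and is exactly the standard derivation: the paper does not supply its own proof of this lemma but simply cites \cite{Johnson1976}, page 603, where the same reasoning (pointwise application of Definition~\ref{def:1}(i), commutation of the time-independent operator $I_h$ with $\partial_t$, and the integral representation of the backward difference followed by Cauchy--Schwarz in time) is used. There is nothing to add.
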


%%%
%%%
%-------------------- Proof ---------------------------

Now we proceed to the proof of Theorem~\ref{the:convergence}. 

\begin{proof}
We begin as usual deriving the error equations.  Using Assumption \ref{as:data} part \ref{c:Diff}, we have for $n=0, \ldots, N_T-1$ for the solutions of \eqref{eq:DPVI} that
%%%%%%%%%%
\begin{multline}
(\partial e_B^n, e_B^{n+1})+(\partial e_N^n, e_N^{n+1})+\mu_1(\nabla e_B^{n+1}, \nabla e_B^{n+1})
+\nu_1(\nabla e_N^{n+1}, \nabla e_N^{n+1}) %\nonumber
\\
\le(\partial e_B^n, e_B^{n+1})+(\partial e_N^n, e_N^{n+1}) +(D_B(x)\nabla e_B^{n+1}, \nabla e_B^{n+1}) %\nonumber
%\\
+(D_N(x)\nabla e_N^{n+1}, \nabla e_N^{n+1}) %\nonumber
\\
=(\partial e_B^n, \eta^{n+1})+(\partial e_N^n, \xi^{n+1})+\mu_2(\nabla e_B^{n+1},\nabla \eta^{n+1}) +\nu_2(\nabla e_N^{n+1},\nabla \xi^{n+1})%\nonumber
\\
+ (\partial B^n,I_hB^{n+1}-B_h^{n+1}) -(\partial B_h^n, I_hB^{n+1}-B_h^{n+1}) %\nonumber
\\
+ (\partial N^n,I_hN^{n+1}-N_h^{n+1}) -(\partial N_h^n, I_hN^{n+1}-N_h^{n+1}) %\nonumber
\\
+(D_B(x)\nabla B^{n+1},\nabla( I_hB^{n+1}-B_h^{n+1}))-   (D_B(x)\nabla B_h^{n+1},\nabla( I_hB^{n+1}-B_h^{n+1})) %\nonumber
\\
+(D_N(x)\nabla N^{n+1},\nabla( I_hN^{n+1}-N_h^{n+1}))-   (D_N(x)\nabla N_h^{n+1},\nabla( I_hN^{n+1}-N_h^{n+1})) .\label{eq:e}
\end{multline}

Taking $ t=t_{n+1}$, $\psi =B_h^{n+1}$, $\chi= N_h^{n+1} -N^{n+1}$ in \eqref{eq:PVI}, and $\psi = I_h B^{n+1}$, $\chi= I_h N^{n+1} - N_h^{n+1}$ in \eqref{eq:DPVI}, and adding the obtained inequalities and equations to \eqref{eq:e}, we have 
\begin{equation}
(\partial e_B^n, e_B^{n+1})+(\partial e_N^n, e_N^{n+1})+\mu_1\| e_B^{n+1}\|_1^2 +\nu_1\| e_N^{n+1}\|_1^2  \le \sum_{j=1}^{10} P_j^n, \label{eq:h}
\end{equation}
where
\begin{eqnarray}
P_1^n&=&(\partial e_B^n, \eta^{n+1}),\\
P_2^n&=&(\partial e_N^n, \xi^{n+1}),\\
P_3^n&=&\mu_2 (\nabla e_B^{n+1},\nabla \eta^{n+1}),\\
P_4^n&=&\nu_2 (\nabla e_N^{n+1},\nabla \xi^{n+1}),\\
P_5^n&=&- (D_B(x) \nabla B^{n+1},\nabla \eta^{n+1})-(\partial B^n,\eta^{n+1})+(f^{n+1},\eta^{n+1}),\\
P_6^n&=& - (D_N(x) \nabla N^{n+1},\nabla \xi ^{n+1})-(\partial N^n,\xi^{n+1})+(g^{n+1},\xi^{n+1}),\\
P_7^n&=&(\partial B^n-\frac{\partial  B}{\partial t} (t_{n+1}), e_B^{n+1}),\\
P_8^n&=&(\partial N^n-\frac{\partial  N}{\partial t} (t_{n+1}), e_N^{n+1}),\\
P_9^n&=& (F( B^{n+1} ,N^{n+1}),e_B^ {n+1})-(F(B_h^{n+1}, N_h^{n+1}), I_h B^{n+1}- B_h^{n+1}),\\
P_{10}^n&=& (G( B^{n+1} ,N^{n+1}),e_N^ {n+1})-(G(B_h^{n+1}, N_h^{n+1}), I_h N^{n+1}- N_h^{n+1}).
\end{eqnarray}
   Multiplying \eqref{eq:h} by $\Delta t$ and summing over $n=0,\ldots, m-1$;\;  $m=1,\ldots,N_T$, we obtain
\begin{multline}
\sum_{n=0}^{m-1} (e_B^{n+1}-e_B^n, e_B^{n+1})+\sum_{n=0}^{m-1}(e_N^{n+1}-e_N^n, e_N^{n+1}) 
\\
+ \mu_1 \sum_{n=0}^{m-1}\| e_B^{n+1}\|_1^2\Delta t+\nu_1 \sum_{n=0}^{m-1}\| e_N^{n+1}\|_1^2\Delta t %\nonumber 
 \le \sum_{j=1}^{10}S_j; \label {eq:i}
\end{multline}
Here we have defined $S_j=\sum_{n=0}^{m-1} |P_j^n| \Delta t$, for $j=1,\ldots, 10$. 

We also have
\begin{eqnarray}
2\sum_{n=0}^{m-1} (e_B^{n+1}-e_B^n, e_B^{n+1})&=&
\sum_{n=0}^{m-1} \|e_B^{n+1}- e_B^n\|_0^2+\|e_B^{m}\|_0^2 -\|e_B^0\|_0^2, \label{eq:k}
\end{eqnarray}
and 
\begin{eqnarray}
2\sum_{n=0}^{m-1} (e_N^{n+1}-e_N^n, e_N^{n+1})&=&\sum_{n=0}^{m-1} \|e_N^{n+1}- e_N^n\|_0^2+\|e_N^{m}\|_0^2. \label{eq:kn}
\end{eqnarray}
Multiplying \eqref{eq:i} by 2 and using \eqref{eq:k}--\eqref{eq:kn} give
\begin{multline}
   \sum_{n=0}^{m-1} \|e_B^{n+1}- e_B^n\|_0^2+\|e_B^{m}\|_0^2+ \sum_{n=0}^{m-1} \|e_N^{n+1}- e_N^n\|_0^2+\|e_N^{m}\|_0^2  \\
+2 \mu_1 \sum_{n=0}^{m-1} \| e_B^{n+1}\|_1^2 \Delta t +2 \nu_1 \sum_{n=0}^{m-1} \| e_N^{n+1}\|_1^2 \Delta t  
\le  \|e_B^0\|_0^2 +2\sum_{j=1}^{10} S_j.\label{eq:m} 
\end{multline}
We shall estimate each one of the $S_j$'s.  Many of these estimates are direct analogues of the estimates in ~\cite{Johnson1976}. Other estimates handle  the consistency and coupling terms.  

%-------------------------------------------------------

\textit{Estimation of $S_1$, and $S_2$,  analogously as in \cite{Johnson1976}}. Applying summation by parts,  Lemma \ref{lam:2}, and inequalities \eqref{eq:PF},  \eqref{eq:eps}, we obtain  
\begin{eqnarray*}
S_1&\le &  \frac{\epsilon C_{PF}^2}{2} \sum_{n=0}^{m-1} \| e_B^{n+1}\|_1^2 \Delta t+  Ch^2 \left \| \frac {\partial B}{\partial t}\right \|_{L^2(J;H^1 (\Omega))}^2+ \frac{\epsilon}{2}\|e_B^m\|_0^2 +  \epsilon\|e_B^0\|_0^2 + Ch^2 \|B\|_{L^\infty (J;H^1(\Omega))}^2. \nonumber
\end{eqnarray*}
By choosing an appropriate $\epsilon$, we have
%%%%%-------------------------------%%%%%%%
 \begin{equation}
S_1\le \frac{\mu_1}{14} \sum_{n=0}^{m-1} \| e_B^{n+1}\|_1^2 \Delta t+   \frac{1}{8}\|e_B^m\|_0^2 
+ \|e_B^0\|_0^2 + Ch^2 . \label{eq:s1}
\end{equation}
Similarly, 
 \begin{equation}
S_2 \le \frac{\nu_1}{10} \sum_{n=0}^{m-1} \| e_N^{n+1}\|_1^2 \Delta t+   \frac{1}{4}\|e_N^m\|_0^2 
+ Ch^2 . \label{eq:s2}
\end{equation}
%%------------------------------------------------------
\textit{Estimation of $S_3$, and $S_4$, analogously as in ~\cite{Johnson1976}}.
\[
S_3 \le \frac{\mu_2 \epsilon}{2}\sum_{n=0}^{m-1}  \| e_B^{n+1} \|_1^2\Delta t +  Ch^2 \|B \|_{L^\infty (J;H^2(\Omega))}^2, 
\]
where we use Lemma \ref{lam:2}. 
In particular, we have
\begin{equation}
S_3 \le \frac{\mu_1 }{14}\sum_{n=0}^{m-1}  \| e_B^{n+1} \|_1^2\Delta t +  Ch^2.  \label{eq:s3}
\end{equation}
Similarly,
\begin{equation}
S_4 \le \frac{\nu_1 }{10}\sum_{n=0}^{m-1}  \| e_N^{n+1} \|_1^2\Delta t +  Ch^2.  \label{eq:s4}
\end{equation}
%%------------------------------------------------------
%%------------------------------------------------------
\textit{Estimation of $S_5$ and $S_6$.} {These estimations are different from  those in \cite{Johnson1976}, since they apply to nonconstant diffusivities $D_B$ and $D_N$. We use Green's theorem to rewrite the first terms of $P_5^n$ and $P_6^n$ as $(\nabla D_B(x) \cdot \nabla B^{n+1}, \eta^{n+1})+(D_B(x)\Delta B^{n+1}, \eta^{n+1})$ and $(\nabla D_N(x) \cdot \nabla N^{n+1}, \xi^{n+1})+(D_N(x)\Delta N^{n+1}, \xi^{n+1})$ respectively. Then we apply Assumption \ref{as:data} parts \ref{c:Diff} and \ref{c:fg}, and we use Lemma \ref{lam:2} to obtain}

\begin{eqnarray}
S_5
\le  \sum_{n=0}^{m-1}\left(\|B^{n+1} \|_1 + \|B^{n+1} \|_2+ \|\partial B^n\|_0 +\|f^{n+1}\|_0\right)  \| \eta^{n+1}\|_0\Delta t\le C h^2. \label{eq:s5}
\end{eqnarray}

Similarly,
\begin{eqnarray}
S_6&\le& \sum_{n=0}^{m-1}\left(\|N^{n+1} \|_1 + \|N^{n+1} \|_2+ \|\partial N^n\|_0 +\|g^{n+1}\|_0\right)  \| \xi^{n+1}\|_0\Delta t\le C h^2. \label{eq:s6}
\end{eqnarray}
%-------------------------------------------------------
%-------------------------------------------------------
\textit{Estimation of $S_7$.} In estimating $S_7$, we follow Johnson's technique \cite{Johnson1976}. However,
we need to deal with coupling term. 
 We  can write $P_7^n$  as
%%%
\begin{multline*}
P_7^n= (\partial B^n-\frac{\partial  B}{\partial t} (t_{n+1}), e_B^{n+1})\\
= \frac{1}{\Delta t}\int_\Omega (B^{n+1}-B^n -\Delta t \frac{\partial B}{\partial t}(t_{n+1}) ) e_B^{n+1} \; d x\\
= \frac{1}{\Delta t}\int_\Omega \left (\int_{t_n}^{t_{n+1}}  \left( \frac{\partial B}{\partial t}(s) - \frac{\partial B}{\partial t}(t_{n+1})\right) \; d s \right ) e_B^{n+1} \; d x\\
= \frac{1}{\Delta t}\int_{t_n}^{t_{n+1}}  \left( \int_{\Omega^-}  \left(\nabla \cdot (D_B\nabla B(s)) - \nabla \cdot (D_B\nabla  B(t_{n+1}))\right) e_B^{n+1}\; d x \right)  \;d s  \\
+ \frac{1}{\Delta t}\int_{t_n}^{t_{n+1}} \left(   \int_{\Omega}  \left(\widetilde F(B(s), N(s))-\widetilde F( B^{n+1}, N^{n+1})\right) e_B^{n+1}\; d x \right)  \; d s
= q_n^1+q_n^2,
\end{multline*}
where 
\[
\widetilde F(B(t),N(t) )=\left\{\begin{array}{ll}
F(B(t),N(t))+f(x,t)  & \mbox{if $x \in \Omega^-(t)$},\\
\min(F(B, N)+f(x,t),0) & \mbox{ if $x \in \Omega^{\ast}(t)$,}
\end{array}
\right.
\]
with obvious notation for $q_n^1$ and $q_n^2$. Since $\nabla \cdot (D_B\nabla B)=0$ a.e on $\Omega^{\ast}$, by using Green's theorem we obtain
\begin{multline*}
q_n^1=\frac{1}{\Delta t}\int_{t_n}^{t_{n+1}}  \left (\int_{\Omega}  \nabla \cdot (D_B\nabla \left(B(s) -   B(t_{n+1})\right) e_B^{n+1}\; d x   \right )  \;d s\\
= \frac{1}{\Delta t}\int_{t_n}^{t_{n+1}} \int_s^{t_{n+1}} \left( \int_{\Omega}  D_B \nabla \frac{\partial B}{\partial t }(t) \cdot \nabla e_B^{n+1}\; d x   \right )  \;dt \; d s.
\end{multline*}
From the Cauchy-Schwarz inequality, we obtain
\begin{multline*}
|q_n^1| \le\frac{\mu_2}{\Delta t}\int_{t_n}^{t_{n+1}} \int_s^{t_{n+1}} \left\|   \frac{\partial B}{\partial t}(t)\right\|_1 \|  e_B^{n+1}\|_1  \;dt \; d s\\
\le\mu_2 (\Delta t)^{1/2} \|  e_B^{n+1}\|_1  \left\|  \frac{\partial B}{\partial t}\right\|_{L^2(J_n;H^1(\Omega))}  
%\\
\le\frac{\epsilon}{2}\|  e_B^{n+1}\|_1 ^2+\Delta t \frac{\mu_2^2}{2\epsilon} \left\| \frac{\partial B}{\partial t}\right\|_{L^2(J_n;H^1(\Omega))}^2.
\end{multline*}
Thus, 
%\begin{equation*}
$\sum_{n=0}^{m-1} |q_n^1|\Delta t \le \frac{\epsilon}{2} \sum_{n=0}^{m-1} \|  e_B^{n+1}\|_1 ^2 \Delta t+C (\Delta t)^2 \| \frac{\partial B}{\partial t}\|_{L^2(J;H^1(\Omega))}^2.
%\end{equation*}
$
In particular, 
%\begin{equation*}
$\sum_{n=0}^{m-1} |q_n^1|\Delta t \le \frac{\mu_1}{14} \sum_{n=0}^{m-1} \|  e_B^{n+1}\|_1 ^2 \Delta t+C (\Delta t)^2.\label{eq:q1}
%\end{equation*}
$
Now, to estimate $q_n^2$, we first notice that if $x\in \Omega \backslash D_n$, then either $x\in \Omega^-(t) \cap \Omega^-(t_{n+1})$
 or $x\in \Omega^{\ast}(t) \cap \Omega^{\ast}(t_{n+1})$ for all $t\in J_n$, so we have 
 \begin{multline*}
|\widetilde F(B(t),N(t)  )-\widetilde F(B^{n+1}, N^{n+1})| \le |F(B(t),N(t) )-  F(B^{n+1}, N^{n+1})|\\
+|f(x,t)-f(x,t^{n+1})|
\; \;\mbox{for all}\; x\in \Omega \backslash D_n.
 \end{multline*}
Thus, 
\begin{multline*}
|q_n^2|\le    \frac{1}{\Delta t}\int_{t_n}^{t_{n+1}}   \int_{\Omega \backslash D_n}  |  F( B(s),N(s) )- F( B^{n+1}, N^{n+1})|| e_B^{n+1}|\; d x   \;\; d s\\
+\frac{1}{\Delta t}\int_{t_n}^{t_{n+1}}   \int_{\Omega \backslash D_n}  |  f( x,s )- f(x, s)|| e_B^{n+1}|\; d x   \;\; d s\\
+ \frac{1}{\Delta t}\int_{t_n}^{t_{n+1}}   \int_{D_n}  (\|F\|_{L^\infty(\mathbb{R}^2)}+\|f\|_{L^\infty(J_n; L^\infty(D_n))})| e_B^{n+1}|\; d x   \;\; d s= k_n^1+k_n^2+k_n^3.
\end{multline*}
Next we estimate the terms $k_n^1+k_n^2+k_n^3$ separately.

The first term $k_n^1$ contains the coupling and nonlinearity in $F$ which are not present in \cite{Johnson1976}, To handle that, we use Assumption \ref{as:data} part \ref{c:FG}, and we have
%%%%%%%%%%%%%%%%%%%%%%%%%%%%%%%%%%%%%%%%%%%%%%%%%%%%%%
\begin{multline}
k_n^1 
\le M \frac{1}{\Delta t}\int_{t_n}^{t_{n+1}}  \int_{\Omega \backslash D_n}  |B(s) -  B(t_{n+1})| |e_B^{n+1}|\; d x  \;\; d s\\
+ M \frac{1}{\Delta t}\int_{t_n}^{t_{n+1}}  \int_{\Omega \backslash D_n}  |N(s) -  N(t_{n+1})| |e_B^{n+1}|\; d x  \;\; d s
\\
\le M \frac{1}{\Delta t}\int_{t_n}^{t_{n+1}}  \int_{\Omega \backslash D_n} \int_s^{t_{n+1} }\left| \frac{\partial B}{\partial t}(t) \right||  e_B^{n+1}| \; dt \; d x  \; d s
%\\
+ M \frac{1}{\Delta t}\int_{t_n}^{t_{n+1}}  \int_{\Omega \backslash D_n} \int_s^{t_{n+1} }\left| \frac{\partial N}{\partial t}(t) \right||  e_B^{n+1}| \; dt \; d x  \; d s
\\
\le M\frac{1}{\Delta t}\int_{t_n}^{t_{n+1}}    \int_s^{t_{n+1} } \left\| \frac{\partial B}{\partial t}(t)\right\|_0 \|   e_B^{n+1} \|_0 \; d t  \; d s
%\\
+ M\frac{1}{\Delta t}\int_{t_n}^{t_{n+1}}    \int_s^{t_{n+1} } \left\| \frac{\partial N}{\partial t}(t)\right\|_0 \|   e_B^{n+1} \|_0 \; d t  \; d s\\
\le M (\Delta t)^{1/2} \|   e_B^{n+1} \|_0 \left(  \left\| \frac{\partial B}{\partial t}\right\|_{L^2(J_n;L^2(\Omega))}+  \left\| \frac{\partial N}{\partial t}\right\|_{L^2(J_n;L^2(\Omega))}\right) \\
\le  \frac{\epsilon}{2} C_{PF}^2  \| e_B^{n+1} \|_1^2 +\frac{M^2 }{ \epsilon}\Delta t \left(\left\| \frac{\partial B}{\partial t}\right\|_{L^2(J_n;L^2(\Omega))}^2 +\left\| \frac{\partial N}{\partial t}\right\|_{L^2(J_n;L^2(\Omega))}^2\right).\label{eq:k01}
\end{multline}
%%%%%%%%%%%%%%%%%%%%%%%%%%%%%%%%%%%%%%%%%%
Next
\begin{multline}
k_n^2 =\frac{1}{\Delta t}\int_{t_n}^{t_{n+1}}   \int_{\Omega \backslash D_n}  |  f( x,s )- f(x, s)|| e_B^{n+1}|\; d x   \;\; d s\\
\le \frac{1}{\Delta t}\int_{t_n}^{t_{n+1}}   \int_{\Omega \backslash D_n} \int_s^{t_{n+1}} \left| \frac{\partial f}{\partial t} (x,t)\right|| e_B^{n+1}|\;d t\; d x   \;\; d s
%\\
\le \frac{1}{\Delta t}\| e_B^{n+1}\|_0\int_{t_n}^{t_{n+1}}  \int_s^{t_{n+1}} \left\| \frac{\partial f}{\partial t}(x,t) \right\|_0\;d t\; \; d s
\\
\le (\Delta t)^{1/2}\| e_B^{n+1}\|_0 \left\| \frac{\partial f}{\partial t}\right\|_{L^2(J_n;L^2(\Omega))}
%\\
\le \frac{\epsilon }{2}C_{PF}^2\| e_B^{n+1}\|_1^2+ \frac{1}{2\epsilon}\Delta t\left\| \frac{\partial f}{\partial t}\right\|_{L^2(J_n;L^2(\Omega))}^2.\label{eq:k02}
\end{multline}
%%%%%%%%%%%%%%%%%%%%%%%%%%%%%%%%%%%%%%%%%%%%%%%
By \eqref{eq:k01} and \eqref{eq:k02}, we have
\begin{equation}
\sum_{n=0}^{m-1} |k_n^1| \Delta t+\sum_{n=0}^{m-1} |k_n^2| \Delta t \le  \frac{\mu_1}{14} \sum_{n=0}^{m-1}  \|e_B^{n+1} \|_1^2 \Delta t+ C(\Delta t)^2 . \label{eq:k1}
\end{equation}
Finally
\begin{eqnarray*}
k_n^3&=&(\|F\|_{L^\infty(\mathbb{R}^2)}+\|f\|_{L^\infty(J_n;L^\infty(D_n))}) \int_{D_n} | e_B^{n+1}|\; d x= (\|F\|_{L^\infty(\mathbb{R}^2)}+\|f\|_{L^\infty(J_n;L^\infty(D_n))})  r_n,
\end{eqnarray*}
%%%

By Lemma \ref{lam:4}, $r_n$ can be estimated as
\[
\sum_{n=0}^{m-1}  r_n \Delta t \le  \frac{\epsilon}{2}\max_n \|e_B^{n+1} \|_0^2 + \frac{\epsilon}{2}\sum_{n=0}^{m-1} \| e_B^{n+1}\|_1^2 \Delta t +C(\log \Delta t^{-1})^{1/2} \Delta t ^{3/2}.
\]
Since at the end we want to kick the term $\max_n\|e_B^{n+1}\|_0^2$ {to the left hand side} of inequality \eqref{eq:m},  we estimate as follows.

Let  $\|e_B^{l+1}\|_0=\max_n\|e_B^{n+1}\|_0$ for some  $n,l\in \{0,\ldots, m-1\}$. Then 
\begin{multline*}
\|e_B^{l+1} \|_0^2=\sum_{j=l+1}^{m-1}\|e_B^{j+1}-e_B^j\|_0^2+\|e_B^m\|_0^2
%\\
\le\sum_{n=0}^{m-1}\|e_B^{n+1}-e_B^n\|_0^2+\|e_B^m\|_0^2.
\end{multline*}

Hence, we get
\begin{eqnarray*}
\sum_{n=0}^{m-1} r_n \Delta t &\le & \frac{\epsilon}{2}\left( \sum_{n=0}^{m-1}\|e_B^{n+1}-e_B^n\|_0^2+\|e_B^m\|_0^2\right)  + \frac{\epsilon}{2}\sum_{n=0}^{m-1} \| e_B^{n+1}\|_1^2 \Delta t +C(\log \Delta t^{-1})^{1/2} \Delta t ^{3/2},
\end{eqnarray*}
which yields 
\begin{multline}
\sum_{n=0}^{m-1} k_n^3 \Delta t 
\le \frac{1}{4}  \sum_{n=0}^{m-1}\|e_B^{n+1}-e_B^n\|_0^2+\frac{1}{8} \|e_B^m\|_0^2
\\
+ \frac{\mu_1}{14}\sum_{n=0}^{m-1} \| e_B^{n+1}\|_1^2 \Delta t +C(\log \Delta t^{-1})^{1/2} \Delta t ^{3/2}.  \label{eq:rnn}
\end{multline}
%-------------------------------------------------------

\textit{Estimation of $S_8$.} Using Cauchy-Schwarz inequality, we have
\[
S_8 \le \frac{\epsilon}{2}\sum_{n=0}^{m-1} \|e_N^{n+1}\|_1^2 \Delta t +\frac{1}{2\epsilon} \sum_{n=0}^{m-1} \left\|\partial N^n-\frac{\partial N}{\partial t}(t_{n+1})\right\|_0^2 \Delta t.
\]
By Assumption \ref{as:data} part \ref{c:sol} on $N$, we have $\frac{\partial^2 N}{\partial t^2} \in L^2(Q)$, thus the second term of the above inequality can be estimated as
\begin{multline*}
\left\|\partial N^n-\frac{\partial N}{\partial t}(t_{n+1})\right\|_0^2 \Delta t 
\le  \int_\Omega \int_{t_n}^{t_{n+1}} \left( \frac{\partial N}{\partial t}(s)-\frac{\partial N}{\partial t}(t_{n+1}) \right)^2  \; d s\; d x \\ 
\le  \Delta t \int_\Omega \int_{t_n}^{t_{n+1}}  \int_s^{t_{n+1}} \left(\frac{\partial^2 N}{\partial t^2}(t)\right)^2  \;dt   \; ds\; dx 
%\\ 
= (\Delta t)^2 \left\|\frac{\partial^2 N}{\partial t^2}\right\|_{L^2(J_n;L^2(\Omega))},
\end{multline*}
which implies 
$
\sum_{n=0}^{m-1} \left\|\partial N^n-\frac{\partial N}{\partial t}(t_{n+1})\right\|_0^2 \Delta t \le (\Delta t)^2 \left\|\frac{\partial^2 N}{\partial t^2}\right\|_{L^2(J;L^2(\Omega))}.
$
Therefore we have 
$
S_8 \le \frac{\epsilon}{2}\sum_{n=0}^{m-1} \|e_N^{n+1}\|_1^2 \Delta t +C(\Delta t)^2. 
$  
In particular,
\begin{equation}
S_8 \le \frac{\nu_1}{10}\sum_{n=0}^{m-1} \|e_N^{n+1}\|_1^2 \Delta t +C(\Delta t)^2. \label{eq:s8}
\end{equation}

%%------------------------------------------------------
%%------------------------------------------------------
 \textit{Estimation of $S_9$,  and $S_{10}$.} These are the consistency terms. By Assumption \ref{as:data} part \ref{c:FG}, $P_9^n$ can be estimated as 
\begin{multline*}
P_9^n= (F( B^{n+1} ,N^{n+1})-F( B^{n+1} ,N_h^{n+1}),e_B^ {n+1})+ (F( B^{n+1} ,N_h^{n+1})-F( B_h^{n+1} ,N_h^{n+1}),e_B^ {n+1})\\
 -(F(B_h^{n+1}, N_h^{n+1}), \eta^{n+1})\\
\le M \|e_N^{n+1}\|_0\|e_B^{n+1}\|_0+ M  \|e_B^{n+1}\|_0^2+ \|F\|_{L^\infty(\mathbb{R}^2 )}\|\eta^{n+1}\|_0\\ 
\le \frac{M^2 \epsilon}{2}\|e_N^{n+1}\|_0^2+\frac{1}{2\epsilon}\|e_B^{n+1}\|_0^2+ M\|e_B^{n+1}\|_0^2+ \|F\|_{L^\infty(\mathbb{R}^2)}m(\Omega)^{1/2}\|\eta^{n+1}\|_0.
\end{multline*}
Hence, we obtain
\begin{equation}
S_9 \le \frac{\nu_1}{10}\sum_{n=0}^{m-1}\|e_N^{n+1}\|_1^2\Delta t+\frac{\mu_1}{14}\sum_{n=0}^{m-1}\|e_B^{n+1}\|_1^2\Delta t + M\sum_{n=0}^{m-1}\|e_B^{n+1}\|_0^2 \Delta t+ Ch^2. \label{eq:s9}
\end{equation}
%%----------------------------------------------------
Similarly,
\begin{equation}
S_{10} \le \frac{\nu_1}{10}\sum_{n=0}^{m-1}\|e_N^{n+1}\|_1^2\Delta t+\frac{\mu_1}{14}\sum_{n=0}^{m-1}\|e_B^{n+1}\|_1^2\Delta t + M \sum_{n=0}^{m-1}\|e_N^{n+1}\|_0^2 \Delta t+ Ch^2. \label{eq:s10}
\end{equation}
%%-------------------------------------------------

Now we collect all the above estimates from \eqref{eq:m}--\eqref{eq:s10} and we get
\begin{multline*}
\sum_{n=0}^{m-1} \|e_B^{n+1}- e_B^n\|_0^2+\sum_{n=0}^{m-1} \|e_N^{n+1}- e_N^n\|_0^2+\|e_B^{m}\|_0^2\\
+\|e_N^{m}\|_0^2+ \mu_1 \sum_{n=0}^{m-1} \| e_B^{n+1}\|_1^2 \Delta t + \nu_1 \sum_{n=0}^{m-1} \| e_N^{n+1}\|_1^2 \Delta t\\
\le 2 \|e_B^0\|_0^2 + C\left(h^2+(\log \Delta t^{-1})^{1/2} \Delta t ^{3/2} \right)
%\\
+  \sum_{n=0}^{m-1} 2M\left( \|e_B^{n+1} \|_0^2+ \|e_N^{n+1} \|_0^2\right) \Delta t. \end{multline*}
Thus we have {
\begin{multline*}
\|e_B^{m}\|_0^2
+\|e_N^{m}\|_0^2+ \mu_1 \sum_{j=1}^{m} \| e_B^{j}\|_1^2 \Delta t + \nu_1 \sum_{j=1}^{m} \| e_N^{j}\|_1^2 \Delta t\\
\le 2 \|e_B^0\|_0^2 + C\left(h^2+(\log \Delta t^{-1})^{1/2} \Delta t ^{3/2} \right)
%\\
+  \sum_{j=1}^{m} 2M\left( \|e_B^{j} \|_0^2+ \|e_N^{j} \|_0^2\right) \Delta t. 
\end{multline*}
By the assumption on the smallness of $\Delta t$, we have $2M \Delta t \left(\|e_B^m\|_0^2+\|e_N^m\|_0^2\right)<\frac{1}{2}\left(\|e_B^m\|_0^2+\|e_N^m\|_0^2\right)$, and hence we have
\begin{multline}
\|e_B^{m}\|_0^2
+\|e_N^{m}\|_0^2+ 2\mu_1 \sum_{j=1}^{m} \| e_B^{j}\|_1^2 \Delta t + 2\nu_1 \sum_{j=1}^{m} \| e_N^{j}\|_1^2 \Delta t\\
\le  C\left(h^2+(\log \Delta t^{-1})^{1/2} \Delta t ^{3/2} \right)
%\\
+  \sum_{j=1}^{m-1} 4M\left( \|e_B^{j} \|_0^2+ \|e_N^{j} \|_0^2\right) \Delta t. \label{eq:col2}
\end{multline}
We apply the discrete Gronwall's Lemma (\cite{BD2007}, inequality (3.27), page 1114)
\begin{multline}
    (r^0)^2+(s^0)^2\le (p^0)^2, \;\;\text{and}\;\;
    %\\
    (r^m)^2+(s^m)^2\le \sum_{j=0}^{m-1}(y^j)^2(r^j)^2+\sum_{j=0}^m(p^j)^2, \;\; m\ge 1\\
    \Longrightarrow (r^m)^2+(s^m)^2\le\exp\left(\sum_{j=0}^{m-1}(y^j)^2\right)\sum_{j=0}^m(p^j)^2, \;\; m\ge 1.\label{eq:Gronwall2}
\end{multline}}
{with $(r^m)^2=\|e_B^{m}\|_0^2
+\|e_N^{m}\|_0^2$,
$(s^m)^2=2\mu_1 \sum_{j=1}^{m} \| e_B^{j}\|_1^2 \Delta t + 2\nu_1 \sum_{j=1}^{m} \| e_N^{j}\|_1^2\Delta t$,
$(p^j)^2=(t_j-t_{j-1}) C \left(h^2+(\log \Delta t^{-1})^{1/2} \Delta t ^{3/2} \right)$, $(y^j)^2=4M\Delta t$ to \eqref{eq:col2} to get
\begin{multline*}
    \|e_B^{m}\|_0^2
+\|e_N^{m}\|_0^2+ 2\mu_1 \sum_{j=1}^{m} \| e_B^{j}\|_1^2 \Delta t + 2\nu_1 \sum_{j=1}^{m} \| e_N^{j}\|_1^2 \Delta t\\
\le  C\left(h^2+(\log \Delta t^{-1})^{1/2} \Delta t ^{3/2} \right)\exp(\sum_{j=0}^{m-1} 4 M\Delta t) 
%\\
\le C\left(h^2+(\log \Delta t^{-1})^{1/2} \Delta t ^{3/2} \right)\exp (4MT).
\end{multline*}
Therefore, there exists a constant $C$ independent of $h$ and $\Delta t$ such that
\[
     \|e_B^{m}\|_0^2
+\|e_N^{m}\|_0^2+  \sum_{j=1}^{m} \| e_B^{j}\|_1^2 \Delta t +  \sum_{j=1}^{m} \| e_N^{j}\|_1^2 \Delta t\\
\le C\left(h^2+(\log \Delta t^{-1})^{1/2} \Delta t ^{3/2} \right),
\]
and the proof is complete.
}
\end{proof}
%%-------------------------------------------------------------------
%%-------------------------------------------------------------------

\begin{remark}
The order of convergence is actually close to first order in $h$ if $h=O(\Delta t)$. 
\end{remark}

%%%%%%%%%%%%%%%%%%%%%%%%%%%%%%%%%%%%%%%%%%%%%%%%
\section{Numerical Solver} \label{sec:solver}
The problem \eqref{eq:DPVI} is a system of nonlinear equations to be solved for $(B_h^n, N_h^n)$ at each time step $n$. Below we formulate this algebraic system and discuss the nonlinear solver which has the structure of Newton-Raphson iterations complemented with the constraint implicit in the variational inequality  \eqref{eq:DPVI}. We work in the framework of nonlinear complementarity constraints (NCC), and with a Lagrange multiplier $\Lambda_h^n$.   Given some $B_*<B^*$ from $\mathbb{R}\cup \{\infty,-\infty\}$ we define the Evans function
\ba
\label{eq:evans}
P_{[B_\ast,B^\ast]}(\psi)=\max\{B_\ast,\min(\psi,B^\ast)\}.
\ea
This function is piecewise differentiable. In the model considered in this paper $B_*=-\infty$, but the solver can be applied easily to the general ``double-obstacle'' case.  

To show how the Newton solver is applied to the solution of PVI, we rewrite \eqref{eq:DPVI} in an equivalent algebraic form. 
%%%%%%%%%%%%%%%%%%%%%%%%%%%%%%%%%%%%%%%%%
Let $V_h=\mathrm{span} \{\phi_i\}_{i=1}^q$ where $q$ is the number of degrees of freedom. \add{Define the vectors of degrees of freedom $\mathbf{B}^n,\mathbf{N}^n$ of $B_h^{n},N_h^{n}$. Define the matrices $\mathbf{M},\; \mathbf{R},\; \mathbf{A}_B,\; \mathbf{A}_N$, each with entries with subscript $(ij)$}
\bas
M_{ij}&=&\int_\Omega \phi_i \phi_j, \;\;
R_{ij}=\int_\Omega P(N^n_h) \phi_i \phi_j,
\\
(A_B)_{ij}&=&\int_\Omega D_B({B^n_h}) \nabla\phi_i \cdot \nabla \phi_j, \;\;
(A_N)_{ij}=\int_\Omega D_N({B^n_h}) \nabla\phi_i \cdot \nabla \phi_j,
\eas
%%%
where we applied time-lagging. For $F$ and $G$ we use \eqref{eq:monod} and define vectors $\mathbf{F_B}$ and $\mathbf{F_N}$ with the entries 
  $(F_B)_i=\int_\Omega f(x,t_{n+1})\phi_i$, $(F_N)_i=\int_\Omega g(x,t_{n+1})\phi_i$.  We also seek the vector $\Lambda^{n+1}$ of pointwise values on the grid. 

In summary \eqref{eq:DPVI} can be written in the form
\begin{subequations}
\label{eq:system1}
\begin{align}
(\mathbf{M}+\;\Delta t \;\mathbf{A_B}-\kappa_B\Delta t\; \mathbf{R}\;)\;\mathbf{B}^{n+1}-\Delta t\; \mathbf{M} \Lambda^{n+1}-\Delta t \mathbf{F_B}-\mathbf{M} \mathbf{B}^n&=0, \label{eq:sys1}\\
\mathbf{B}-P_{[B_\ast,B^\ast]}(\mathbf{B}^{n+1}-\Lambda^{n+1})&=0;\label{eq:sys2}\\
(\mathbf{M}+\;\Delta t \;\mathbf{A_N})\mathbf{N}^{n+1}+\kappa_N\Delta t\; \mathbf{R}\mathbf{B}^{n+1}-\Delta t \mathbf{F_N}-\mathbf{M} \mathbf{N}^n&=0. \label{eq:sys3}
\end{align}
\end{subequations}

\begin{remark}
The equations \eqref{eq:sys1} and \eqref{eq:sys3} are the discrete counterparts of the diffusion--reaction equations at each time step. The middle equation \eqref{eq:sys2} enforces pointwise that $B_\ast \le B_h^{n+1}\le B^\ast$.  The Lagrange multiplier is in fact a selection
%%%%%%%
$
-\Lambda_h^{n+1}\in \partial I_{[B_\ast,B^\ast]}(B_h^{n+1})
$.
\end{remark}

%%%%%%%%%%%%%%%%%%%%%%%%%%%%%%%%%%
\add{In summary, \eqref{eq:system1} can be written together
with $\mathcal{F}:\mathcal{R}^{3q}\rightarrow\mathcal{R}^{3q}$
\[
\mathcal{F}(\mathbf{B}^{n+1}, \Lambda^{n+1}, \mathbf{N}^{n+1})=0,
\]
where $\mathcal{F}$ is  semismooth because of the mixed complementarity condition \eqref{eq:sys2} expressed with the semismooth Evans function \eqref{eq:evans}; see \cite{Ulbrich2011}. 
We solve this problem by iteration for  $(\overline{ \mathbf{B}^{n+1}},\overline{ \Lambda^{n+1}},\overline{ \mathbf{N}^{n+1}}) \approx (\mathbf{B}^{n+1},\Lambda^{n+1},\mathbf{N}^{n+1})$ only up to certain tolerance $tol$. The algorithm starts with $( \mathbf{B}^0,0,\mathbf{N}^0)$ from initial data. }
\begin{algorithm}
\label{alg:semi}
(Semismooth Newton Method for \eqref{eq:system1}, at every time step $n\geq 0$) 

Given $(\overline{ \mathbf{B}^n},\overline{\Lambda^{n}},\overline{\mathbf{N}^n}$), solve for $(\overline{\mathbf{B}^{n+1}},\overline{\Lambda^{n+1}},\overline{\mathbf{N}^{n+1}})$ as shown below. 

Denote the iterative guesses by 
$({\mathbf{B}^{(k)}},\Lambda^{(k)}, {\mathbf{N}^{(k)}})$. 
%%%%
\begin{itemize}
    \item [Step (0).] Choose an initial guess  ${ \mathbf{B}^{(0)}}=\overline{\mathbf{B}^n}$, $\Lambda^{(0)}=\overline{\Lambda^n}$, and ${\mathbf{ N}^{(0)}}=\overline{\mathbf{N}^n}$, and set $k=0$. 
    \item [Step (1).] Evaluate current residual 
    $\mathcal{F}^{(k)}= \mathcal{F}({\mathbf{B}^{(k)}}, \Lambda^{(k)}, {\mathbf{N}^{(k)}})$.    
    \\
    If $\|\mathcal{F}^{(k)}\|_\infty< tol,$ let $K=k$, and go to Step 5.
\item [Step (2).] Evaluate current Jacobian. Select some $\mathcal{J}^{(k)} \in \partial \mathcal{F}( \mathbf{B}^{(k)}, \Lambda^{(k)},  \mathbf{N}^{(k)})$. 
    \item [Step (3).] {Solve} for the correction $\mathcal{J}^{(k)} s_k =-\mathcal{F}^{(k)}$; where $s^k=(s_{B}^k,s_{\Lambda}^k, s_{N}^k)$.
    \item [Step (4).] Correct the current guess: set 
    \bas
    (\mathbf{B}^{(k+1)},\Lambda^{(k+1)}, \mathbf{N}^{(k+1)})=( \mathbf{B}^{(k)},\Lambda^{(k)},  \mathbf{N}^{(k)})+(s_{B}^{(k)},s_{\Lambda}^{(k)}, s_{N}^{(k)}).
    \eas
    Increment $k$ by one, and go to Step 1.
  \item [Step (5).] Set  $\overline{\mathbf{B}^{n+1}}= \mathbf{B}^{(K)}$, $\overline{\Lambda^{n+1}}=\Lambda^{(K)}$ and $\overline{\mathbf{N}^{n+1}}= \mathbf{N}^{(K)}$. STOP. 
\end{itemize}
\end{algorithm}

The Algorithm~\ref{alg:semi} is very efficient and performs in a way superior to other solvers such as, e.g. \textit{relaxation methods} given in \cite{Glowinski1980}. See \cite{Azharthesis} for our comparison of these solvers for a model PVI}. \add{We use $tol=10^{-6}$. Since the average number of Newton iterations is between $2$ and $3$, we skip the detailed report.} In general, the solver can be proven to converge locally $q$-superlinearly \cite{Ulbrich2011}. 
\add{In our implementation we use MATLAB, and its built-in linear solver for sparse systems.}

%----------------------------------------------

%%%%%%%%%%%%%%%%%%%%%%%%%%%%%%%%%%%%%%%%%%%%%%%%
\section{Numerical Experiments} \label{sec:NE}
In this section we present numerical experiments designed to show convergence at the rates predicted by Theorem~\ref{the:convergence} for $d$=$2$. We also show convergence in the cases not covered by the theory; in particular, we consider $d$=$1$, $d$=$3$, as well as Neumann boundary conditions. 

Further, we consider some simulations with nonlinear diffusivities as well as in irregular geometries similar to those encountered at the porescale. Our experiments are motivated by the experimental set-up in \cite{PTISW2016,CVTAKPT}. We illustrate the pointwise behavior of the biofilm and nutrient and of the total amount 
\ba
\bar B(t)=\int_{\Omega} B(x,t) \;dx
\ea
of biomass depending on modeling assumptions. 
For some simulations we present both the approximations to $B(x,t)$ as well as of $N(x,t)$. In some other simulations, the illustrations of $N(x,t)$ are predictable and are skipped. 
In captions, we abbreviate ``boundary conditions'' to ``bc''. 

\medskip
We examine the approximation error in two norms
analyzed in Theorem~\ref{the:convergence}.
The first 
ERR1$^0$=$\max_n (\|e_B^n\|_0+\|e_N^n\|_0)$, and the second error quantity %%%
ERR2$^0$=$\sqrt{\sum_n\left(\|e_B^n\|_1^2+\|e_N^n\|_1^2\right)\Delta t}$ for a scalar problem is shown in \cite{Johnson1976} to be of the same order as ERR1$^0$. 

\begin{remark}
In practice, we are unable to verify the convergence exactly using ERR1$^0$ and ERR2$^0$, because the true solutions $B(x,t)$ and $N(x,t)$ to our coupled system are not known.  \add{Manufacturing solutions under constraints is nontrivial, and doing so for the coupled system is even more complicated except in scalar examples in \cite{Azharthesis} or physically unrealistic examples}. Thus we choose fine grid solutions $B_{fine},N_{fine}$ as surrogates for $B,N$, with some $h_{fine}$ significantly smaller than $h$ considered in the convergence study. Unfortunately, this approach requires also an appropriately small $\Delta t_{fine}$, resulting in a very large number of time steps. 
\\
Therefore, our ability to actually check the convergence over all time steps $n=1,2,\ldots$ as indicated in ERR1$^0$ and ERR2$^0$ with these large numbers of time steps is limited. Instead, we limit ourselves to the sampling of the spatial errors in time only over a limited set of a selected few $K$ time steps $\Upsilon=\{T_1,T_2,\ldots T_K\}$ which correspond to some selected indices $\{N_1,N_2,\ldots N_K\}$, different for each $\Delta t$. In what follows we report
\bas
ERR1^\Upsilon&=&\max_{n \in \{N_1,N_2,\ldots N_K\} } (\|e_B^n\|_0+\|e_N^n\|_0),
\\
ERR2^\Upsilon&=&\sqrt{\sum_{n \in \{N_1,N_2,\ldots N_K\} }
\left(\|e_B^n\|_1^2+\|e_N^n\|_1^2\right)\Delta t},
\eas
and in each instance we indicate which $T_1,T_2,\ldots T_K$ are in $\Upsilon$.
\end{remark}

\begin{remark}
\label{rem:sparse}
It is well known that using fine grid solution may somewhat overpredict the convergence rate. In addition, in our examples the  sampling of the error in time is quite sparse, therefore we expect to see convergence rate higher than that predicted by the theorem. 
\end{remark}

%%%%%%%%%%%%%%%%%%%%%%%%%%%%%%%%%%%%%%%%
\subsection{1D experiments with Dirichlet Boundary Conditions}
\label{sec:1d}

We start with a simple model problem in 1d with homogeneous Dirichlet boundary conditions. The data in this model problem satisfies exactly the conditions in  Assumption~\ref{as:data}, but strictly speaking the case $d$=$1$ is not covered by Theorem~\ref{the:convergence}.

\begin{example}[1D Simulation] \label{ex:1D}
 Let $\Omega=(0,1)$, and let the diffusion coefficients be constant $D_B$=$0.5$, $D_N$=$0.1$. 
 We set the initial biofilm $B_{init}(x)=0.01 |sin(\pi x)|$,  the initial nutrient   $N_{init}(x)=0.02\chi_{(0.25, 0.75)}$. 
 We use Monod functions $F(B,N)=\frac{2500N}{N+0.7}B$ and  $G(B,N)=-\frac{100N}{N+0.7}B$. We also set $B^\ast=0.02$, and use boundary condition $B(0,t)=B(1,t)=0=N(0,t)=N(1,t)$. \end{example}

    %---------------------------------------------
 %%%%%%%%%%%%%%%%%%%%%%%%%%%%%%%%%%%%%%
 \begin{figure}
    \begin{center}
    \includegraphics[scale=0.34]{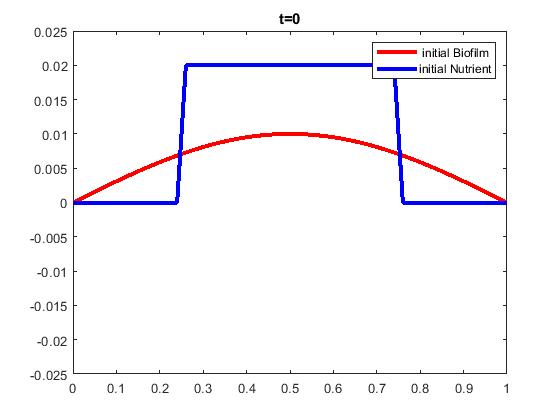}
   \includegraphics[scale=0.34]{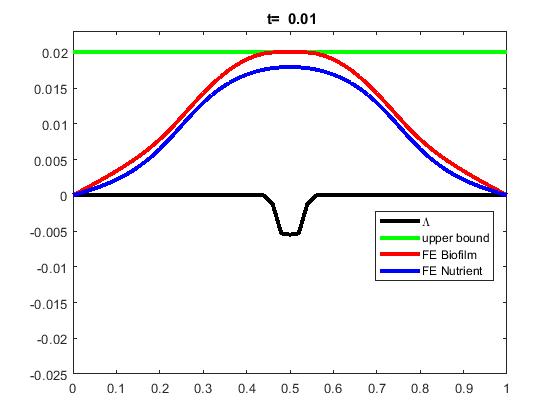}
    \includegraphics[scale=0.34]{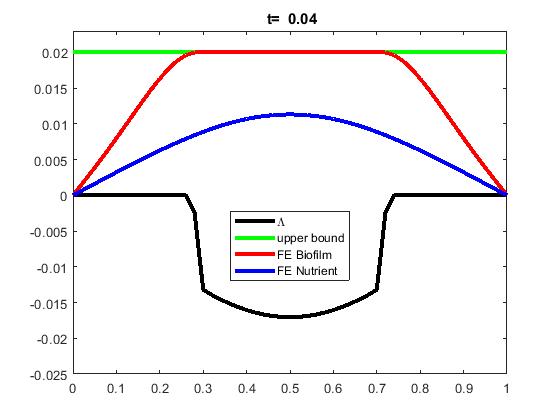}
    \includegraphics[scale=0.34]{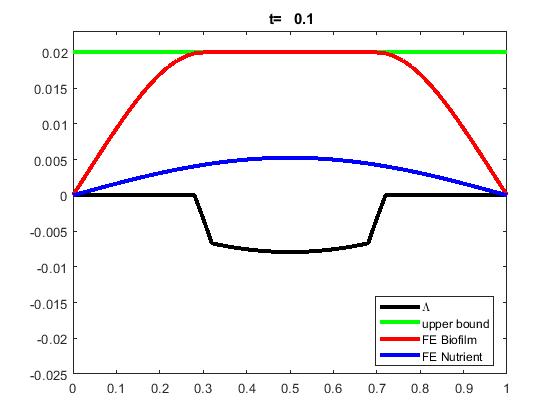}
    \caption{Evolution in Ex.~\ref{ex:1D} with $h=0.02$, and $\Delta t=0.005$. } 
    \label{fig:1D}
    \end{center}
    \end{figure}
    %

%%%%%%%%%%
\subsubsection{}
Fig.~\ref{fig:1D} shows the typical growth of biofilm  and the decay of  nutrient over time up until about $T \approx 0.02$. Since the nutrient is initially concentrated in the middle of the domain, the majority of growth of $B$ and decay of $N$ occurs there. Eventually the nutrient diffuses away, and the biomass starts also growing elsewhere. 
 
At $t=0.02$ the biomass reaches the maximum density $B^*$, and the biofilm ``phase'' in $\Omega^*$ forms.
The Lagrange multiplier $\Lambda$ is active when needed in $\Omega^*(t)$ to enforce the constraint. \add{The Lagrange multiplier takes always negative values, since its role (when moved to the right hand-side) is to ``push the solution down'' so that \eqref{eq:constraint} is satisfied.}
The biofilm starts growing  through the interface moving as a free boundary; this scenario is similar to that described in Ex.~\ref{ex:dn}. This shows that \eqref{eq:g} is a reasonable assumption. 

The nutrient is not consumed in $\Omega^*$, but it slowly diffuses away \add{towards the external boundaries at $x=0$ and $x=1$ through which it escapes, and  towards the region $\Omega^-$ where it is consumed by the growing biomass.
}

\subsubsection{} 
To test convergence of the numerical model, we use
a fine grid solution with $h=0.001$ and $\Delta t=0.0001$ as a surrogate for an analytical solution. Since it would be difficult to set up $\Delta t$ to make the logarithmic terms $
[(\log (\Delta t)^{-1})^{1/4} \Delta t ^{3/4}]$ conform to $h$, we choose 
$\Delta t=O(h)$.  
We present the errors with $\Upsilon=\{0.05,0.1\}$. Table \ref{tab:1D} shows $ERR1^\Upsilon$  and $ERR2^\Upsilon$. 

The results demonstrate an essentially first order of convergence in ERR1$^\Upsilon$ in $h$, a bit higher than that predicted by the Theorem~\ref{the:convergence} for $d$=$2$, likely thanks to an additional regularity of the solution exhibited by a modest size of the region $D_n$ discussed in \eqref{eq:Dn}, and due to our error sampling strategy discussed in Remark~\ref{rem:sparse}. 
On the other hand, we see that the convergence order in ERR2 is  about $O(h^{3/2})$. These results can be compared to those theoretically predicted in \cite{Johnson1976} as well as our numerical results for that theoretical case shown in the Appendix. 

%%%%%%%%%%%%%%%%%%
\begin{table}
\begin{center}
 \begin{tabular}{c c c c c c }	
\hline
 $h$  & $\Delta t$ & $ERR 1$ &$ERR 2$ & $ERR 1$ order &   $ERR 2$ order\\
\hline \hline
0.01  &       0.01 &     0.00026 &   0.00028  &          &         \\
0.005&    0.005 &   0.00014 &   0.00010 &   0.95433  &   1.4365 \\
 0.0025&     0.0025 &   6.6251e-05  &  3.6292e-05 &    1.0347 &    1.4961\\ 
    \hline
  \end{tabular}
  \caption{Convergence for Ex.~\ref{ex:1D}; Dirichlet bc}
  \label{tab:1D}
\end{center}
\end{table}
%---------------------------------------------------

%%%%%%%%%%%%%%%%%%%%%%%%%%%%%%%%%%

\subsection{Experiments in $d=1$, with homogeneous Neumann bc and various choices of diffusivity}

Here we focus on the choice of diffusivity $D_B$, since this choice is the primary difference between the \add{various models in the literature}. In particular, we set up experiments with Neumann conditions to model the growth in an isolated system, the only difference among the experiments being the choice of $D_B(B)$. 

{
\begin{example}[1D Simulation, growth with Neumann conditions]
\label{ex:1DNeu}
  We fix the domain $\Omega=(0,1)$, $N_{init}=\chi_{\Omega}$, $B_{init}=0.2\chi_{(0.4,0.6)}$, the diffusivity of nutrient $D_N=0.5$, the maximum density $B^*=0.03$, the growth and utilization  functions $F(B,N)=\frac{10 N}{N+0.007}B$, $G(B,N)=\frac{5 N}{N+0.007}B$, respectively. We vary the diffusivity of biofilm $D_B$. We consider four cases (i) $D_B=0.1$, (ii) $D_B=0.001$,  (iii) $D_B=(D_{\max}-D_{\min})(B/B^*)+D_{\min}$, (iv) $D_B=(D_{\max}-D_{\min})(B^8/B^*)+D_{\min}$;  $D_{\max}=0.1$, $D_{\min}=0.001$. 
\end{example}}

%%%%%%%%
\begin{figure}
    \centering
    \includegraphics[scale=0.3]{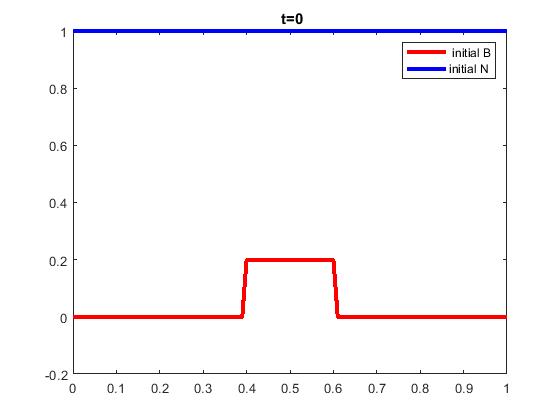}
    \includegraphics[scale=0.3]{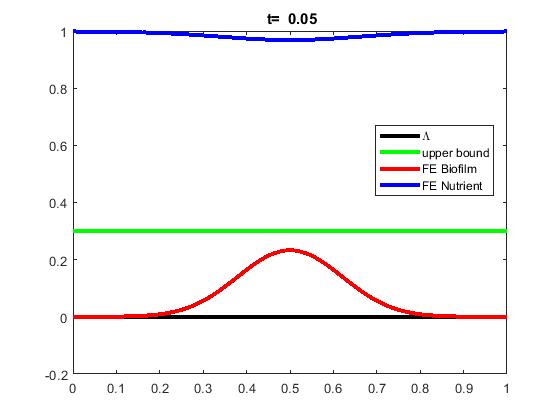}
    \includegraphics[scale=0.3]{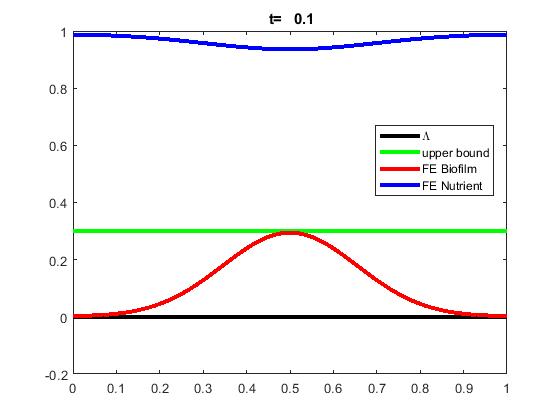}
    \includegraphics[scale=0.3]{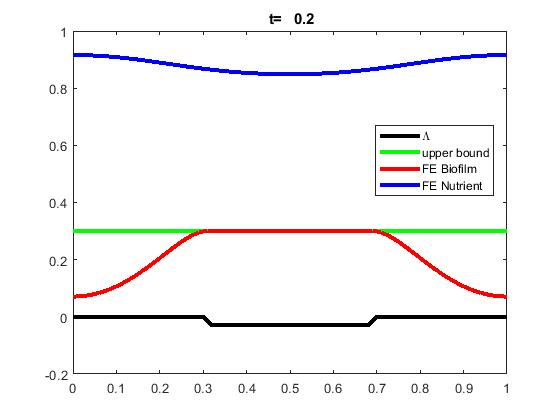}
    \caption{Evolution in Ex.~\ref{ex:1DNeu} with $D_B=0.1$, $h=0.01$, $\Delta t=0.002$}
    \label{fig:1DNeu_i}
\end{figure}
%%%%%%%%%%%%%%%%%%%%%%%%%%%%%%%%%%%%%%%%%%
\begin{figure}
    \centering
      \includegraphics[scale=0.3]{pic/fig5t0.jpg}
      \includegraphics[scale=0.3]{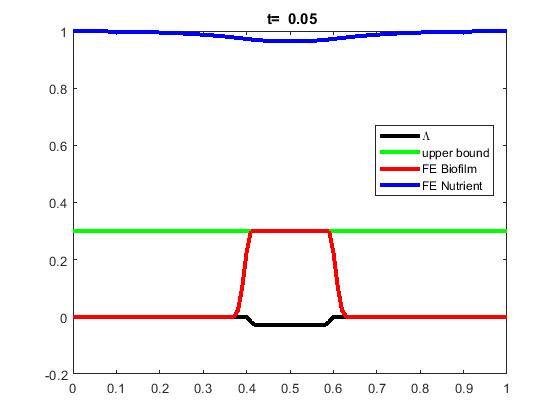}
      \includegraphics[scale=0.3]{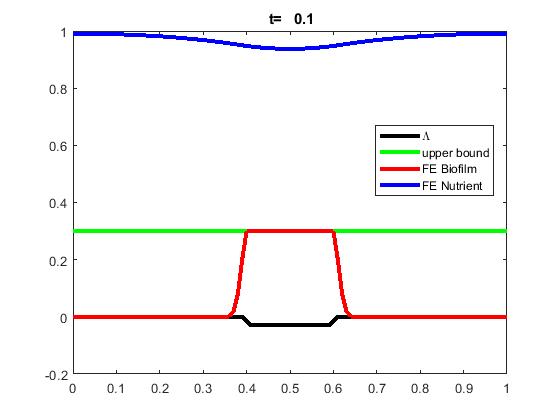}
      \includegraphics[scale=0.3]{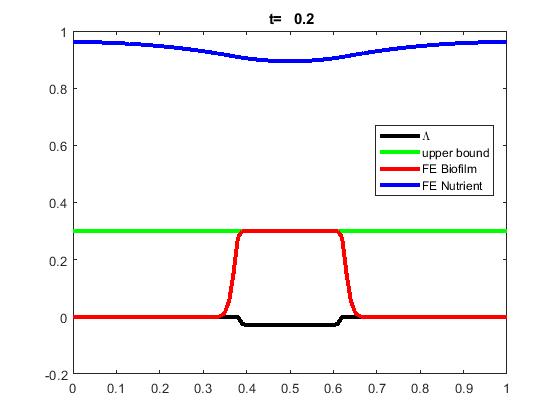}
    \caption{Evolution in Ex.~\ref{ex:1DNeu} with $D_B=0.001$, $h=0.01$, $\Delta t=0.0002$}
    \label{fig:1DNeu_ii}
\end{figure}
%%%%%%%%%%%%%%%%%%%%%%%%%%%%%%%%%%%%%%%%%%
\begin{figure}
    \centering
    \includegraphics[scale=0.3]{pic/fig5t0.jpg}
    \includegraphics[scale=0.3]{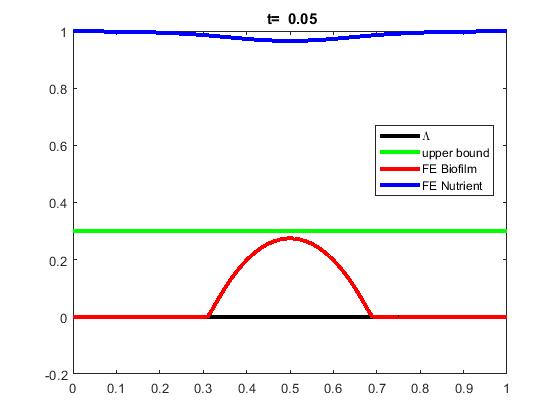}
    \includegraphics[scale=0.3]{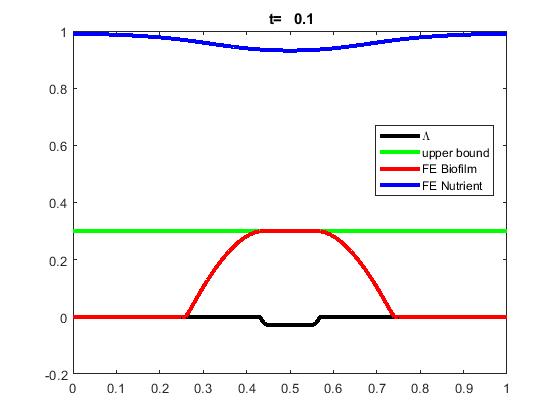}
    \includegraphics[scale=0.3]{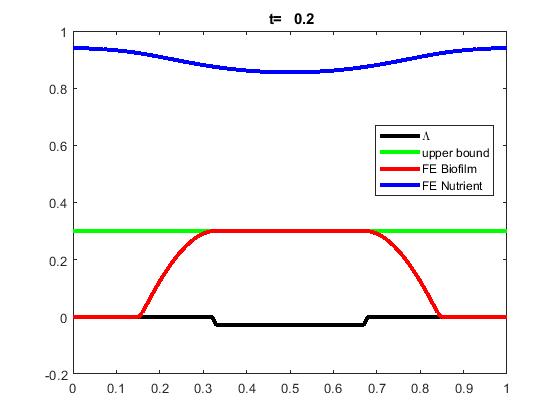}
    \caption{Evolution in Ex.~\ref{ex:1DNeu} with $D_B=(D_{\max}-D_{\min})(B/B^*)+D_{\min}$;  $D_{\max}=0.1$, $D_{\min}=0.001$, $h=0.01$, $\Delta t=0.0002$}
    \label{fig:1DNeu_iii}
\end{figure}
%%%%%%%%%%%%%%%%%%%%%%%%%%%%%%%%%%%%%%%%%%%
\begin{figure}
    \centering
    \includegraphics[scale=0.3]{pic/fig5t0.jpg}
    \includegraphics[scale=0.3]{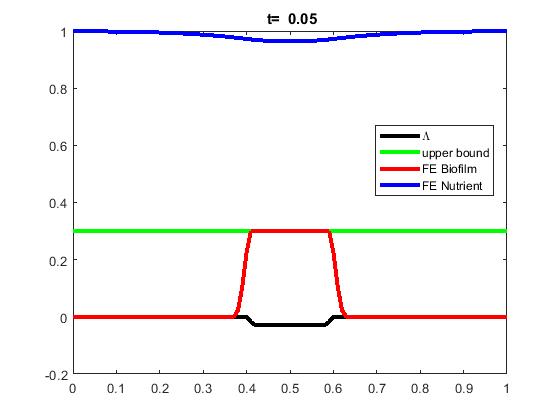}
    \includegraphics[scale=0.3]{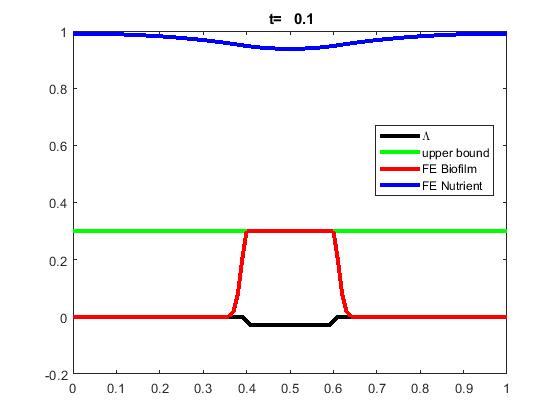}
    \includegraphics[scale=0.3]{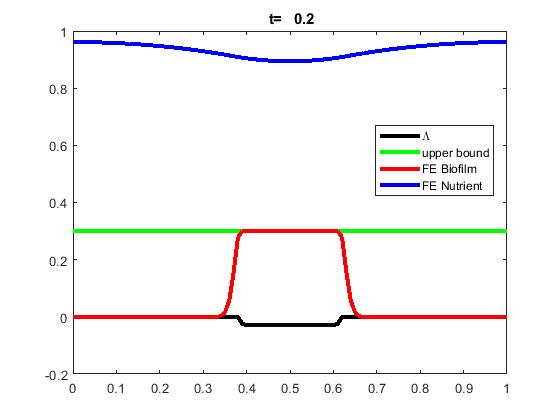}
    \caption{Evolution of $B(x,t)$, $N(x,t)$ in Ex.~\ref{ex:1DNeu} with $D_B=(D_{\max}-D_{\min})(B^8/B^*)+D_{\min}$;  $D_{\max}=0.1$, $D_{\min}=0.001$, $h=0.01$, $\Delta t=0.0002$}
    \label{fig:1DNeu_iv}
\end{figure}
%%%%%%%%%%%%%%%%%%%%%%%%%%%%%%%%%%%%%%%%

See the evolution of biofilm and nutrient in Fig.~\ref{fig:1DNeu_i}--\ref{fig:1DNeu_iv}. The difference in evolution patterns is substantial; this remains true also when $\bar{B}(t)$ alone is considered; this is explained below. 

In fact $\bar{B}(t)$ is the only quantity that can be verified experimentally without imaging.   Consider momentarily the unconstrained and constrained models governed by \eqref{eq:ssystem} and \eqref{eq:PVI}, respectively. With unlimited nutrient supply, the growth of $\bar{B}(t)$ solving \eqref{eq:ssystem} in a closed system without the constraint \eqref{eq:constraint} should be exponential. (See the simulation in the Appendix.) However, the experiments in \cite{CVTAKPT} demonstrate that the microbial growth within the biofilm phase \add{is not exponential except in the beginning when $\Omega^* \approx \emptyset$. Our simulation of \eqref{eq:PVI} confirms that  once the constraint is active and the set $\Omega^*$ is growing, the growth ceases to be exponential.}  These effects are stronger when we use small or nonlinear diffusivities. 

We explore these variants in simulations. Fig.~\ref{fig:totB_1D} shows the evolution of the total amount of biofilm $\bar{B}(t)$ depending on the variant (i) through (iv); \add{these particular models are entirely heuristic}. We see that the growth of $\bar{B}(t)$ before reaching the maximum density is exponential in the case of constant $D_B$  up to the time the free boundary starts forming. The growth is faster with larger $D_B$, but the free boundary moves faster with smaller $D_B$.
In the case of nonlinear diffusivity (iii) as in \cite{PTISW2016}, as well as in the more singular variant (iv)  similar to that used in phase field models, e.g., \cite{zhangklapper}, or hybrid models in \cite{Eberal}, we see again that the growth of $\bar{B}(t)$ is exponential up to when $\Omega^*$ forms, but a more singular diffusivity gives a more pronounced  tapering effect. \add{Here by ``more singular'' $D_B(B)$ we mean a function with a large gradient close to $B^*$. We do not use a truly singular  $D_B(B)$ which blows up at $B\uparrow B^*$ such as in \cite{Eberal}.}

\begin{figure}
    \centering
    \includegraphics[scale=0.3]{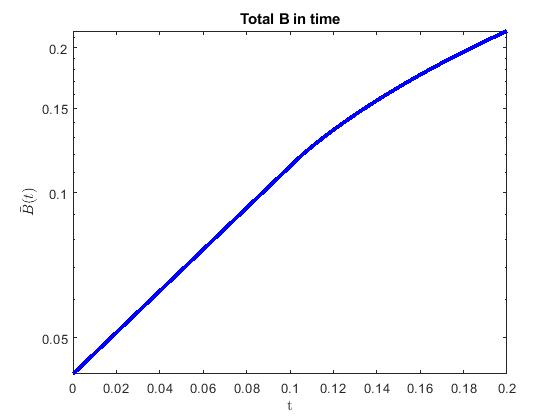}
    \includegraphics[scale=0.3]{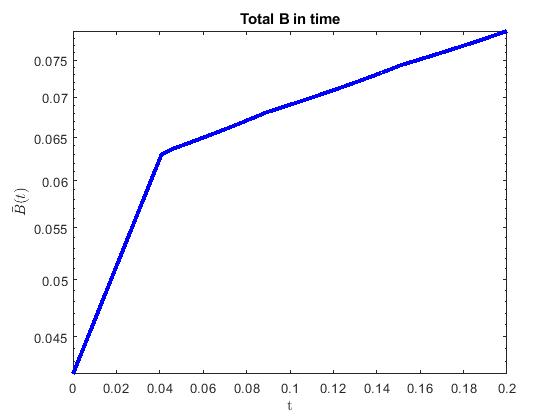}
\\
    \includegraphics[scale=0.3]{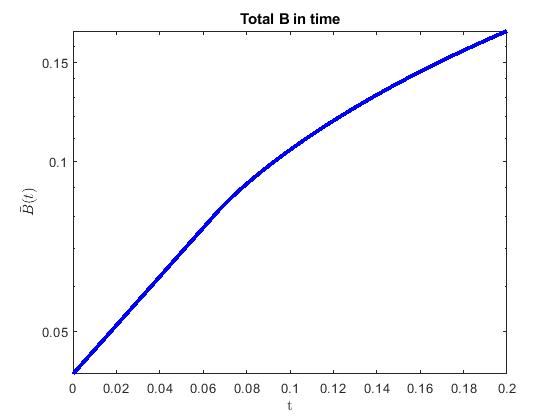}
    \includegraphics[scale=0.3]{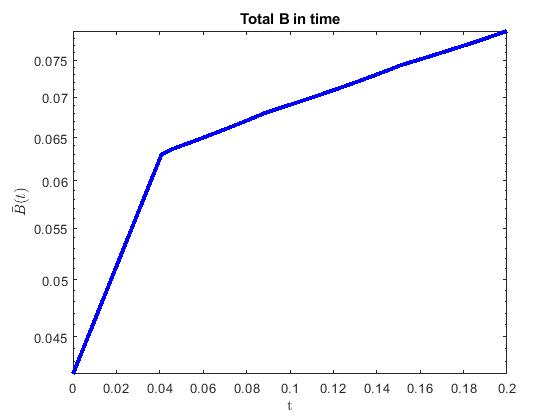}
    \caption{Total $\bar{B}(t)$ on log-scale in Ex.~\ref{ex:1DNeu} with  (i) $D_B=0.1$ (top left), (ii) $D_B=0.001$ (top right), 
    (iii) $D_B=(D_{\max}-D_{\min})(B/B^*)+D_{\min}$ (bottom left), (iv) $D_B=(D_{\max}-D_{\min})(B^8/B^*)+D_{\min}$ (bottom right);  $D_{\max}=0.1$, $D_{\min}=0.001$. The tapering of exponential growth is more pronounced for more singular diffusivity, \add{with a high gradient close to $B^*$}.    
    }
    \label{fig:totB_1D}
    \end{figure}

\subsection{Simulations in $d=2$}
In this section we verify the theoretical result of Theorem~\ref{the:convergence}. 
We also show that the behavior of the coupled biofilm-nutrient dynamics depends significantly on the geometry of the domain $\Omega$, the boundary conditions, and the model for diffusivities $D_B(B)$. The examples are designed to show the growth through interface, starting from an initial biomass concentrated in a disk 
$D(r)$ centered at the origin with radius $r$.   

\begin{example}[Simulation in $d=2$, with Dirichlet boundary conditions] \label{ex:2Ddiri}
Consider the square domain $\Omega=(-1,1)^2$ with $ D_B=0.01$, and $D_N=0.5$. We set $ B_{{init}}= 0.2\chi_{D(0.5)}$, and ${B^\ast}=0.3$. We also set $N_{init}= \chi_{D(0.75)}$.   The Monod functions are $F=\frac{5N}{N+0.7}B$, $G=-\frac{0.5N}{N+0.7}B$. 
\end{example}

%------------------------------------------------
\begin{figure}
    \centering
    \includegraphics[scale=0.3]{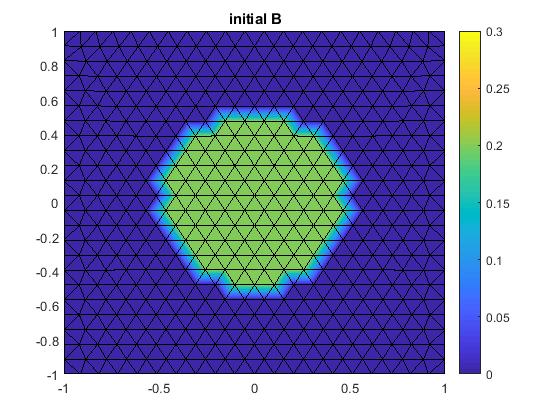}
    \includegraphics[scale=0.3]{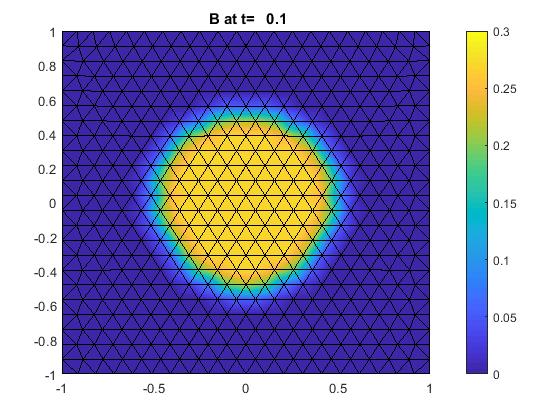}
%    \includegraphics[scale=0.41]{pic/B_ex2_T02.jpg}
%    \includegraphics[scale=0.41]{pic/B_ex2_T04.jpg}
%    \caption{Growth of $B$  in Example~\ref{ex:2Ddiri}  with Dirichlet boundary conditions and $h=0.1$\label{fig:B0}
 %   }
 %   \end{figure}
 %   \begin{figure}
   \includegraphics[scale=0.3]{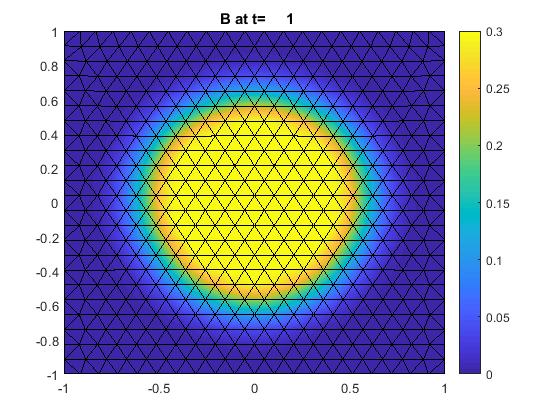}
    \includegraphics[scale=0.3]{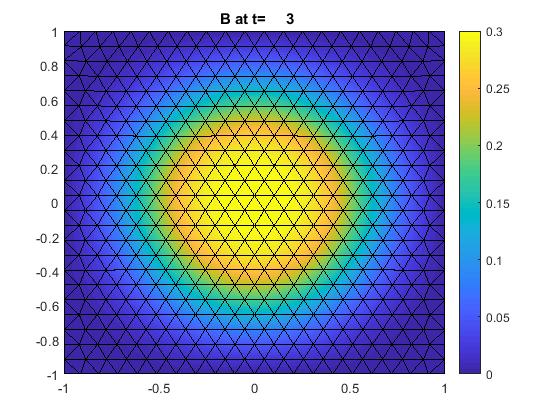}
    \caption{Evolution of $B(x,t)$ in Ex.~\ref{ex:2Ddiri}; Dirichlet bc.}
    \label{fig:B}
\end{figure}
%-----------------------------------------------
\begin{figure}
    %\centering
    \includegraphics[scale=0.3]{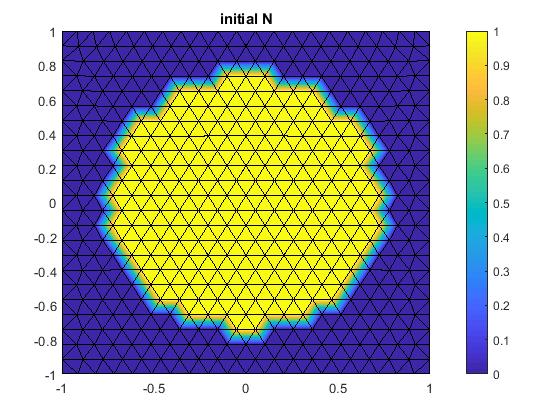}
    \includegraphics[scale=0.3]{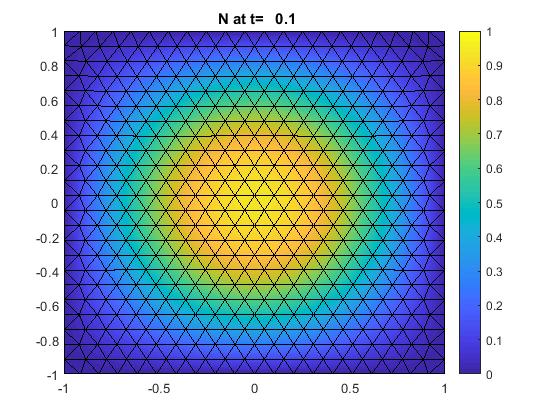}
    \includegraphics[scale=0.3]{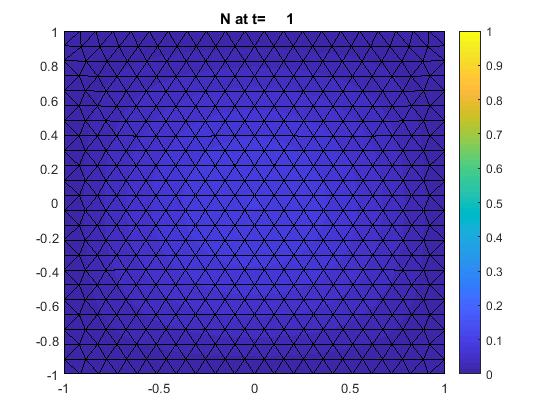}
    \includegraphics[scale=0.3]{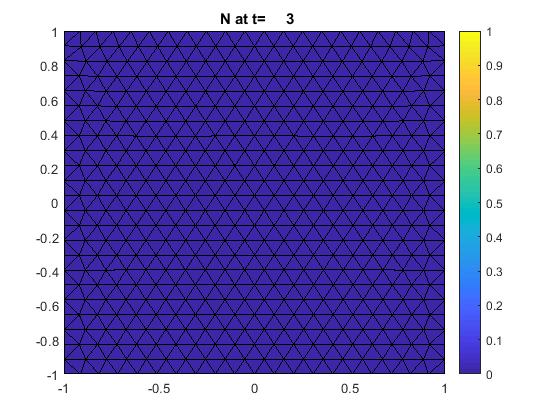}
    \caption{Consumption of $N(x,t)$ in Ex~\ref{ex:2Ddiri}; Dirichlet dc.}
    \label{fig:N}
\end{figure}
%-------------------------------------------------------------

%%%%%
\subsubsection{}
Figures~\ref{fig:B} and \ref{fig:N} show the evolution of $B$ and $N$ over time; here $h=0.1$. We use 494 nodes, and 899 elements. 

The growth of $B(x,t)$ for $0<t<0.4$ is vigorous and concentrated near its initial position, and then for $0.4 < t <1$ its spread through the interface. Nutrient is consumed most substantially where $B$ grows. Around $t=1$ both $B$ and $N$ start to decay, and the evolution is dominated by the escape of $B$ and $N$ through the boundary due to the homogeneous Dirichlet conditions.

%%%%%%
\subsubsection{} For convergence test, we compute the fine grid solution $(B_{fine},N_{fine})$ as a proxy for $(B,N)$, triangulating the domain with $\mathcal{T}_{fine} = 22887$ triangles, with 11660 nodes and 34546 edges, where the maximum length of each side of the triangles is $h_{fine}=0.02$. % 
We use $\Delta t_{fine}=5\times 10^{-5}$, and for the purposes of convergence testing we store
the numerical solution $(B_{fine},N_{fine})$  at $\Upsilon=\{0.1,0.2,0.3\}$. 

We compute the solution at coarse grids and compare it to $B_{fine}$ and $N_{fine}$, calculating the associated values of the approximation error. The errors are given in Table~\ref{tab:2D}. 
 It seems that $ERR1^\Upsilon$ is essentially of the first order, whereas $ERR2^\Upsilon$ is of $O(h^{3/2})$, similarly as in the case of $d=1$. Again we see that the free boundary is smoothly expanding, and the size of $\sum_n m(D_n)$ is only mildly increasing. 

%------------------------------------------------
%------------------------------%%T=0.1%%-----------------------------------
\begin{table}
    \centering
      \begin{tabular}{cccccccc}
\hline
$h$& $\Delta t$&$\#$nodes&  $\#$ elements&ERR1& ERR2& ERR1 ord.& ERR2 ord.\\
\hline
0.15&0.006&219&378&0.0499347&0.0474&&\\
0.1& 0.004&494& 899 &0.0282514&0.0274&1.4047&1.3517\\
0.05&0.002&1906&3638&0.0113125  &0.0087&1.3204&1.6551\\
\hline
\end{tabular}

\begin{tabular}{cccccccc}
\hline
$h$& $\Delta t$&$\#$nodes&  $\#$ elements&ERR1& ERR2& ERR1 ord.& ERR2 ord.\\
\hline 
0.15& 0.006& 219& 378& 0.0411& 0.0550&&\\
0.1& 0.004&494 &899 & 0.0221& 0.0371&1.5302&0.9710\\
0.05&0.002&  1906 &3638 & 0.0108&0.0141&1.0330&1.3957\\
\hline
\end{tabular}
\caption{Convergence test for Ex.~\ref{ex:2Ddiri}; Dirichlet bc (top)
%\label{tab:err2Ddir}
and  for Ex.~\ref{ex:2DNeu1}; Neumann bc. (bottom). We use $\Upsilon=\{0.1,0.2,0.3\}$.}
%\label{tab: 2DN}
\label{tab:2D}
\end{table}

%----------------------------------------------

%--------------------------------------------

\subsection{Experiments in $d=2$ with homogeneous Neumann boundary conditions }

    %%%%%%%%%%%%%%%%%%%%%%%%%%%%%
\begin{example}[2D Simulation] \label{ex:2DNeu1}
 In this example, we consider data as in Ex.~\ref{ex:2Ddiri} except the boundary and initial conditions. We choose homogeneous Neumann conditions to model an isolated system, a nonsymmetric initial condition 
 $ B_{init}=0.3\chi_{(-0.75,0)\times(-0.5,0.5)}$, and provide abundant nutrient $ N_{init}\equiv 1$. 
 \end{example}

We note that the case of Neumann boundary conditions is not covered by the theory. However, the errors  shown in Table~\ref{tab:2D} demonstrate that the  order of convergence is similar to that we obtained for Dirichlet boundary conditions.  

Furthermore, we track the cumulative growth, i.e., $\bar{B}(t)$; see Fig.~\ref{fig:totalB}, in which we compare the growth to that in more complex geometry and in $d$=$3$ addressed below. 

%%%

%
%-------------------------------------------------

%----------------------------------------------------

%-------------------------------------------------

%---------------------------------------------

%---------------------------------------------
\subsection{Simulation in complex \porescale\ geometry and in $d$=$3$.}
\begin{example}[Simulation in  porescale geometry]\label{ex:2DNeu3}
We use the data in Table \ref{tab:ex5}, and we consider a domain $\Omega$ with geometry motivated by \cite{PTISW2016}. We use nonlinear diffusivity $D_B=D_B(B)$, $F(B,N)=\kappa_B\frac{N}{N+\eta}B$, $G(B,N)=-\kappa_N\frac{N}{N+\eta}B$, $\Omega_b$ is the initial biofilm domain as shown in the illustration.  \end{example}
 
 The evolution of biofilm is shown in Fig.~\ref{fig:ex5BN} and the total amount $\bar{B}(t)$ in Fig.~\ref{fig:totalB}. We see the effects of reaching $B^*$ as well as of the limited propagation through interface due to the complex geometry. 

\begin{table}
    \centering
    \begin{tabular}{l l }
\hline
%Domain&$\Omega_l=B(0,1)\subset\mathbb{R}^3$\\
%Biofilm domain &$\Omega_b=\{(x,y,z); 0<x<0.5,$\\
%&$\;\;-0.25<y<1,\; -1<z<0.5\}$\\
Maximum  density of biomass& $B^\ast=0.12 kg/m^3$ \\
%of biomass &  \\
Growth constant & $\kappa_B= 1.8 /s$\\
Utilization constant & $\kappa_N= 1.8 \cdot 10/s$\\
Monod constant & $\eta=0.16 kg/m^3$\\
Nutrient diffusivity &$D_N(x)=20 m/s $ \\
Biomass diffusivity & $D_B(B)=(D^\ast-D_\ast)(B/B^\ast)+D_\ast$\\
& $D^\ast=0.01,\; D_\ast=10^{-4}D^\ast$\\
Initial nutrient&$N_{0}(x)=\chi_{\Omega}$\\
Initial biomass &$B_{0}(x)=0.03\chi_{\Omega_b}$\\
\hline
  \end{tabular}
    \caption{Data in Ex.~\ref{ex:2DNeu3} and \ref{ex:3D} from \cite{PTISW2016} scaled by $10^{5}$}
    \label{tab:ex5}
\end{table}

\begin{figure}
    \centering
    \includegraphics[scale=0.41]{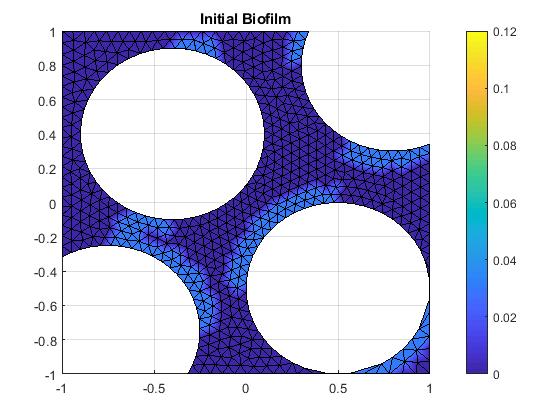}
    \includegraphics[scale=0.41]{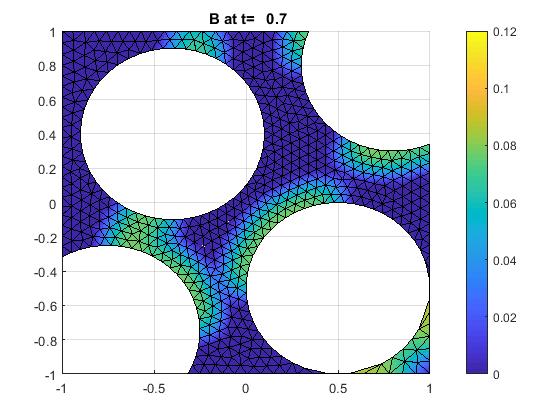}
    \includegraphics[scale=0.41]{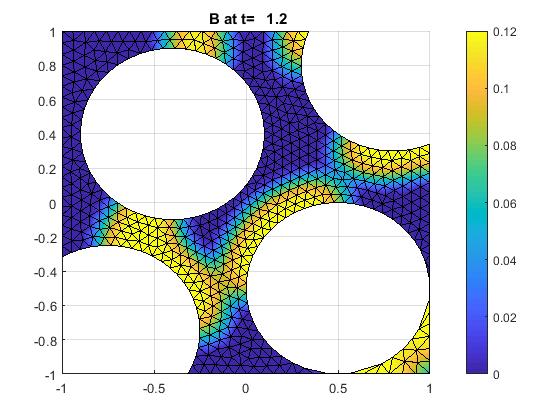}
    \includegraphics[scale=0.41]{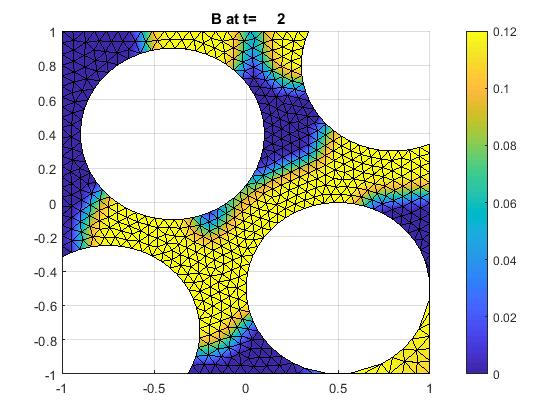}
    \caption{Growth of $B(x,t)$ in Ex.~\ref{ex:2DNeu3} simulated with $h=0.1$ \add{and $899$ elements}; Neumann bc.}
    \label{fig:ex5BN}
\end{figure}

%---------------------------------------------

\begin{figure}
    \centering
    \includegraphics[scale=0.25]{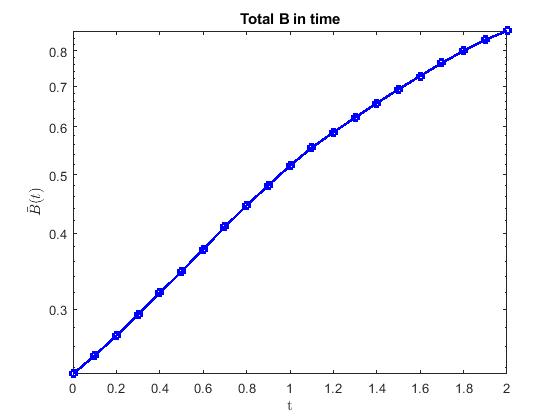}
   \includegraphics[scale=0.25]{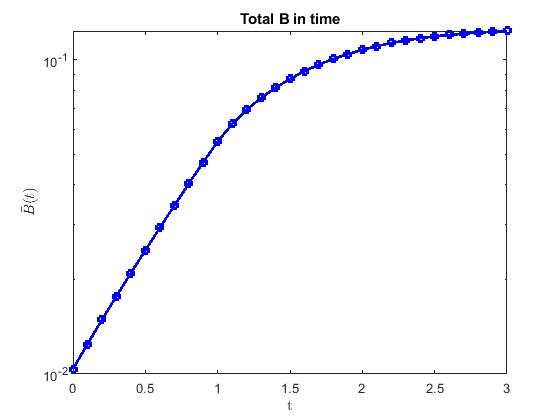}
\includegraphics[scale=0.25]{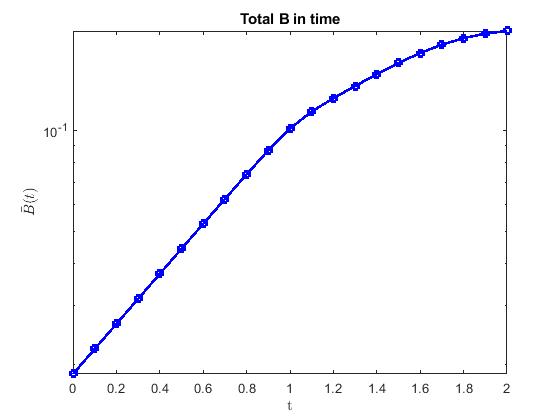}
\caption{Total $\bar{B}(t)$
for simulations with Neumann bc for Ex.~\ref{ex:2DNeu1} (simple $d$=$2$ geometry, porescale geometry in Ex.~\ref{ex:2DNeu3}, and in $d$=$3$ in Ex.~\ref{ex:3D}.}
    \label{fig:totalB}
\end{figure}

%---------------------------------------

%\subsection{Simulations in $d=3$}

\begin{example}[3D Simulation] \label{ex:3D}We use the data in Table \ref{tab:ex5}.
In this example, $\Omega$ is a ball with two holes which represent solid grains. The initial biofilm is chosen to adhere to the solid surfaces as is shown in Figure~\ref{fig:Bex6}. 
\end{example}

As time goes, biofilm keeps growing in its initial domain until $T\approx 1$ when it reaches its maximum $B^*=0.12$. After that,  biofilm starts spreading through the interface. The evolution of biofilm is illustrated in Fig.~\ref{fig:Bex6}.
%%%%%%%
\begin{figure}
\centering
    \includegraphics[scale=0.3]{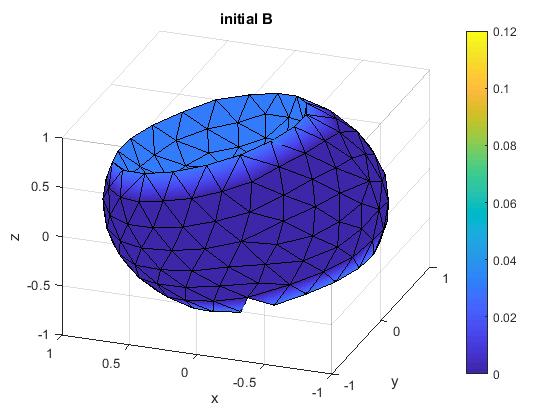}
        \includegraphics[scale=0.3]{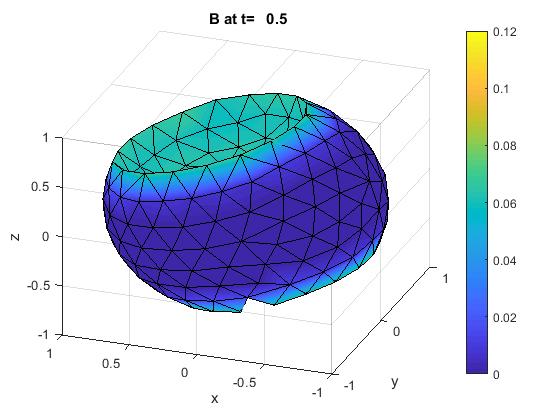}
        \includegraphics[scale=0.3]{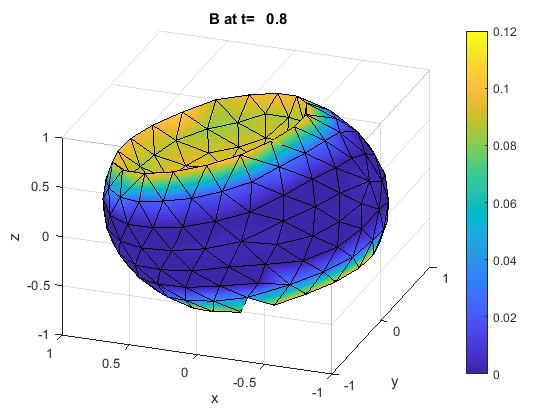}
        \includegraphics[scale=0.3]{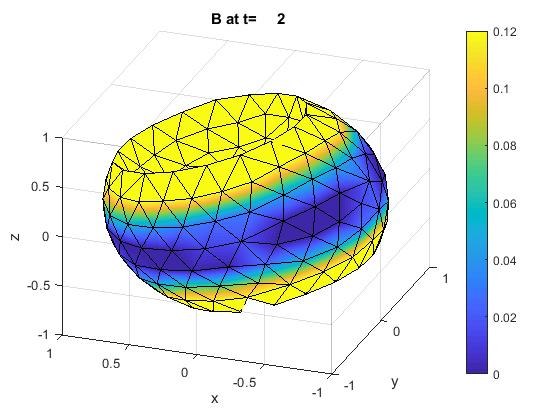}
        \caption{Evolution of biofilm in Ex.~\ref{ex:3D}, with {$h=0.2$} \add{and 1397 elements.}}
        \label{fig:Bex6}
    \end{figure}
    %-------------------------------------------

%%%

\medskip
We conclude with a comment on the behavior of $\bar{B}(t)$ 
shown in Fig.~\ref{fig:totalB}. The qualitative behavior for the Ex.~\ref{ex:2DNeu3} and \ref{ex:3D} seems different from than that in the generic bulk 2d domain of Ex.~\ref{ex:2DNeu1}. The growth of $\bar{B}(t)$ in Fig.~\ref{fig:totalB} appears initially fast in all cases, and a plateau of exponential growth is achieved about the time the growth through interface begins. We do not see this effect as strongly in Ex.~\ref{ex:2DNeu1} where constant diffusivity is used, but we see this effect pronounced in the other two examples. Furthermore, the growth through interface is further restricted in complex \porescale\ geometry. 

%%%%%%%%%%%%%%
\section{Conclusions and summary}

In this paper we proved and verified error estimates for a nonlinear coupled system of parabolic variational inequalities modeling biofilm--nutrient dynamics at microscale. 

We derived error estimates of rate $O(h+(log \Delta t^{-1})^{1/4}\Delta t^{3/4})$ in $l^\infty(L^2)$ and $l^2(H^1)$- norms. The rate of convergence was validated experimentally in 1D and 2D. Although the theoretical analysis dealt with Dirichlet boundary conditions only, the numerical results showed the same rate of convergence also with Neumann boundary conditions. Moreover, simulations in 2D and 3D with irregular geometries analogous to those obtained from imaging at porescale and with data  motivated by realistic simulations in \cite{PTISW2016} showed typical behavior of biofilm and nutrient in porous media. 

Further studies and analyses of the importance of modeling choice of $D_B(B), D_N(N)$ are underway. We are also considering rigorous analysis of the approximation \add{with advection and coupling to the flow}, and with other than Galerkin FE. Last but not least, \add{we hope to be able to address further properties of the solutions such as non-negativity, e.g., through some form of Discrete Maximum Principle. }

%%%%%%%%
\subsection*{Acknowledgements}
\add{First, we would like to thank the anonymous reviewers for their suggestions which helped to improve our paper.} 

Next, some of the work reported in this paper is part of the PhD thesis \cite{Azharthesis} by Dr Azhar Alhammali at Oregon State University. She would like to gratefully acknowledge the support of Saudi Arabian Cultural Mission to the US. In addition, 
this research was  partially supported by the NSF grant 
DMS-1522734 ``Phase transitions in porous media across multiple scales'', \add{and DMS-1912938 ``Modeling with Constraints and Phase Transitions in Porous Media'', as well as by M.~Peszynska's NSF IRD plan 2019-20. }

Last but not least we would like to thank the collaborators on \cite{PTISW2016} and in particular Dr Anna Trykozko for providing initial inspiration for the modeling efforts on this project. 

%%%%%%%%%%%%%%%%%%%%%%%  
\section*{Appendix} \label{apx}
Here we provide two examples which provide context for and can be compared to our convergence and simulations reported in Sec.\ref{sec:NE}. 

In particular, in Ex.~\ref{ex:solPVI} we show convergence of FE method for a scalar linear PVI; this should be compared with that for the coupled constrained system consider in Sec.~\ref{sec:1d}.  

%%%%%%%%
%--------------------------------------
%-------------------------------------------
\begin{example}\label{ex:solPVI}
In this example we approximate solutions to a scalar PVI
\begin{subequations}
\label{eq:exPVI}
\begin{align}
    B_t-0.5{ B_{xx}}+\partial_{[-0.04,  0.06]}B\ni& \pi^2\sin(x)B+3H(0.5-x)-3H(x-0.5) \;\mbox{on}\; (0,1),\; t>0,\\
    B(0,t)=0=&B(1,t),\; t>0,\\
    B(x,0)=&0.04\sin(\pi x)
\end{align}.
\end{subequations}

The approximation error in Fig.~\ref{fig:appendix} for this scalar problem shows convergence order similar to that in our 1d examples for the coupled system in Sec.~\ref{sec:1d}.
\end{example}

In turn, in Ex.~\ref{ex:Uncon} we show convergence rate   $O(\Delta t +h^2)$  of an unconstrained coupled system analyzed in \cite{Azharthesis}.

%----------------------------------
\begin{figure}
    \centering
\includegraphics[scale=0.3]{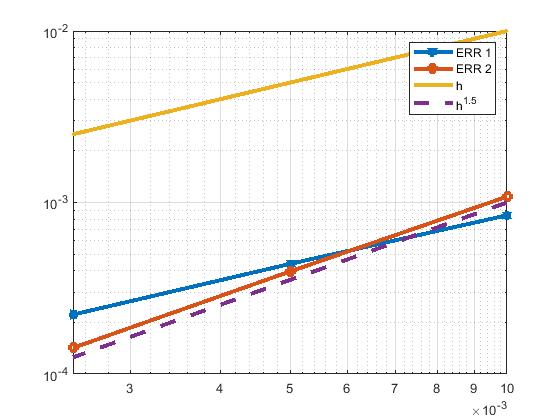}
\includegraphics[scale=0.3]{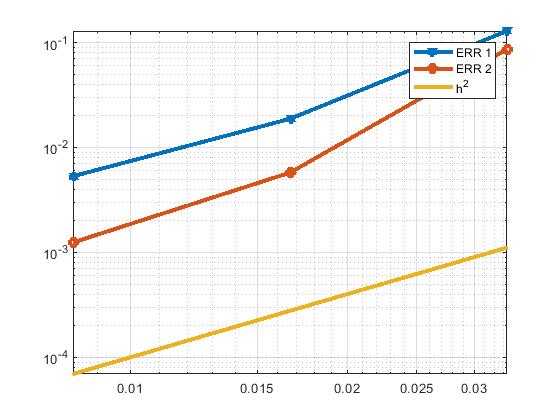}
\caption{Left: rate of convergence of FE solution for scalar PVI of Ex.~\ref{ex:solPVI}, $ERR 1=\max_n\|B^n-B_h^n\|_{ 0}$.
$ERR 2=\sqrt{\sum_n\|B^n-B_h^n\|_{ 1}^2 \Delta t}$.
Right: rate for convergence of Ex.~\ref{ex:Uncon}; here ERR1 and ERR2 are in a product space.}
    \label{fig:appendix}
\end{figure}

%%%%%%%%%%%%%%%%%%%
%\subsection{Rate of convergence of piecewise finite element approximation of unconstrained coupled system}\label{sec:ERRUnconst}
\begin{example}[Unconstrained Coupled System in 1D] \label{ex:Uncon}
{
 Consider the same data in Example \ref{ex:1D}, but $B$ in this case is unconstrained, i.e. $B^*=\infty$; see Fig.~\ref{fig:uncos}. The convergence error seems to be the usual  $O(h^2+\Delta t)$ in $ERR1=l^\infty(L^2)$ and  $ERR2=l^2(H_0^1)$ as in 
 Fig.~\ref{fig:appendix}.
 This should be compared with the system under constraints in Sec~\ref{sec:1d} with rate close to $O(\Delta t+h)$.
  }

\begin{figure}
    \centering
    \includegraphics[scale=0.3]{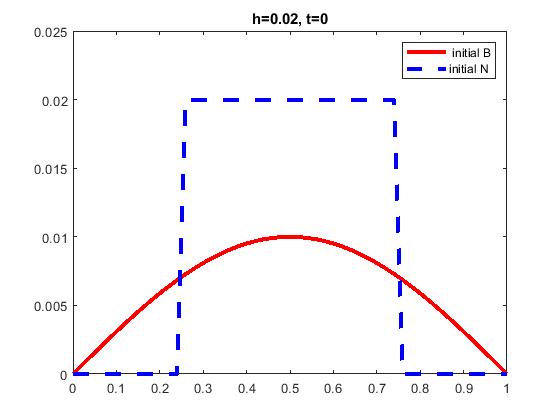}
    \includegraphics[scale=0.3]{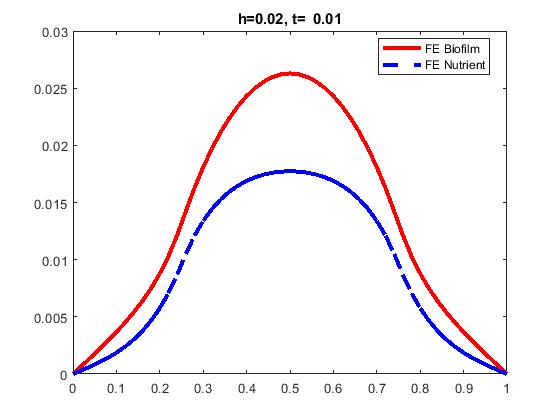}
    \includegraphics[scale=0.3]{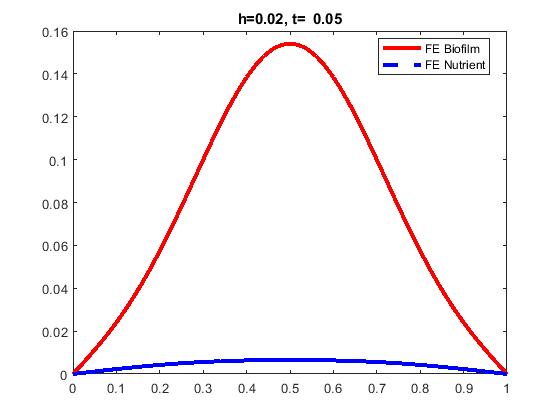}
    \includegraphics[scale=0.3]{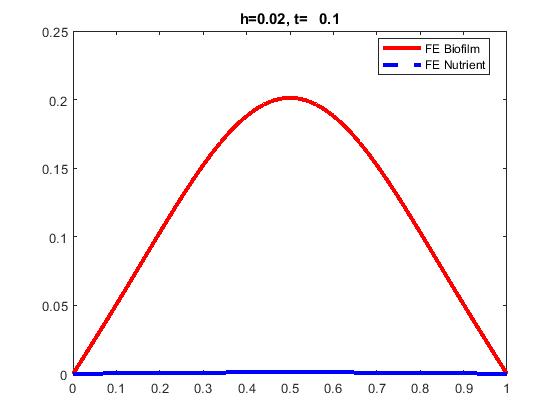}
    \caption{Solutions of Ex.~\ref{ex:Uncon} without constraints}
    \label{fig:uncos}
\end{figure}

\end{example}

%%%%%%%%%%%%%%%%%%%%%%%%%%%%%%%%%%%%%%%%%%%
\bibliography{Azharbib,mpesz}
\bibliographystyle{plain}
\end{document}